\newcommand{\R}{\mathbb R}
\newcommand{\F}{\mathcal F}
\newcommand{\Qv}{\mathbb Q}
\newcommand{\Pv}{\mathbb P}
\newcommand{\E}{\mathbb E}
 \renewcommand{\headrulewidth}{0pt}
 \renewcommand{\footrulewidth}{0.5pt}
 \definecolor{myaqua}{rgb}{0.0,0.5,0.55}
 \definecolor{lightaqua}{rgb}{0.75,0.95,0.95}
\newtheorem{theorem}{Theorem}
\newtheorem{prop}[theorem]{Proposition}
\newtheorem{coro}{Corollary}
\newtheorem{defn}{Definition}[section]
\newtheorem{rem}{Remark}[section]
\def\lin#1#2{\textcolor[rgb]{0.6,0.6,0.6}{\vspace*{#1mm} \hrule
   height 3 pt \vspace*{#2mm}}}
\def\bt{\begin{tabular}}
\def\et{\end{tabular}}
\def\and{\mbox{ and }}
\def\E{\mbox{\bf E}}
\def\1{{\bf 1}}
 \def\boxx#1#2#3#4#5{
 {\linethickness{#4pt}\put(#1,#5){\color{myaqua}{\line(1,0){#3}}}}
 \multiput(#1,#2)(0,#4){2}{\line(1,0){#3}}
 \multiput(#1,#2)(#3,0){2}{\line(0,1){#4}}
  }
\begin{document}

% \fancyhead[L]{\hspace*{-13mm}
% \bt{l}{\bf Open Journal of *****, 2014, *,**}\\
% Published Online **** 2014 in SciRes.
% \href{http://www.scirp.org/journal/*****}{\color{blue}{\underline{\smash{http://www.scirp.org/journal/****}}}} \\
% \href{http://dx.doi.org/10.4236/****.2014.*****}{\color{blue}{\underline{\smash{http://dx.doi.org/10.4236/****.2014.*****}}}} \\
% \et}
% \fancyhead[R]{\special{psfile=pic1.ps hoffset=366 voffset=-33}}

 $\mbox{ }$

 \vskip 12mm

{ % \fontfamily{Cambria}\selectfont

% "Title of the Paper"
{\noindent{\Large\bf\color{myaqua}
  Some contributions to the study of stochastic processes of the classes \texorpdfstring{$\Sigma(H)$}{sigma(H)} and \texorpdfstring{$(\Sigma)$}{sigma}  }} %\mathbb{R}_{+} $\Sigma(H)$
%
% \runtitle{Change-Point Analysis of Survival Data}
\\[6mm]
{\bf Fulgence EYI-OBIANG$^1$, Youssef OUKNINE$^2$, Octave MOUTSINGA$^3$, Gerald TRUTNAU$^4$}}
\\[2mm]
{ %\fontfamily{Calibri}\selectfont
 $^1$URMI Laboratory, Département de Mathématiques et Informatique, Faculté des Sciences, Université des Sciences et Techniques de Masuku, Franceville, Gabon 
 \\
Email: \href{mailto:feyiobiang@yahoo.fr}{\color{blue}{\underline{\smash{feyiobiang@yahoo.fr}}}}\\[1mm]
$^2$LIBMA Laboratory, Department of Mathmatics, Faculty of Sciences Semlalia, Cadi Ayyad University, P.B.O. 2390 Marrakech, Morocco and Hassan II Academy of Sciences and Technologies, Rabat, Morocco\\
Email:
\href{mailto:ouknine@ucam.ac.ma}{\color{blue}{\underline{\smash{ouknine@ucam.ac.ma}}}}\\[1mm]
$^3$ URMI Laboratory, Département de Mathématiques et Informatique, Faculté des Sciences, Université des Sciences et Techniques de Masuku, Franceville, Gabon \\
\href{mailto:octavemoutsing-pro@yahoo.fr}{\color{blue}{\underline{\smash{octavemoutsing-pro@yahoo.fr}}}}\\[1mm]
$^4$ Department of Mathematical Sciences and Research Institute of Mathematics of Seoul National University, \\599 Gwanak-Ro, Gwanak-Gu, Seoul 08826, South Korea\\
Email:
\href{mailto:trutnau@snu.ac.kr}{\color{blue}{\underline{\smash{trutnau@snu.ac.kr}}}}\\[1mm]
%\href{mailto:octavemoutsing-pro@yahoo.fr}{\color{blue}{\underline{\smash{octavemoutsing-pro@yahoo.fr}}}}\\[1mm]
\lin{5}{7}

 {  %\fontfamily{Cambria}\selectfont
 {\noindent{\large\bf\color{myaqua} Abstract}{\bf \\[3mm]
 \textup{
 This paper consists of two independent parts. In the first one, we  contribute to the study of the class $(\Sigma)$. For instance, we provide a new way to characterize stochastic processes of this class. We also present some new properties and solve the Bachelier equation. In the second part, we study the class of stochastic processes $\Sigma(H)$. This class was introduced in \cite{f} where from tools of the theory of martingales with respect to a signed measure of \cite{chav}, the authors provide a general framework and methods for dealing with processes of this class. In this work, after developing some new properties, we embed a non-atomic measure $\nu$ in $X$, a process of the class $\Sigma(H)$. More precisely, we find a stopping time $T<\infty$ such that the law of $X_{T}$ is $\nu$.
 }}}
 \\[4mm]
 {\noindent{\large\bf\color{myaqua} Keywords:}{\bf \\[3mm]
 Bachelier equation; Skorokhod embedding Problem; Class $\Sigma(H)$; class $(\Sigma)$; relative martingales; martingale with respect to a signed measure; Hardy-Littlewood function
}}\\[4mm]{\noindent{\large\bf\color{myaqua} MSC:}{\color{blue} 60G07; 60G20; 60G46; 60G48}}
\lin{3}{1}

\renewcommand{\headrulewidth}{0.5pt}
\renewcommand{\footrulewidth}{0pt}

 \pagestyle{fancy}
 \fancyfoot{}
 \fancyhead{} % clear all header and footer fields
 \fancyhf{}
 \fancyhead[RO]{\leavevmode \put(-90,0){\color{myaqua}F. EYI-OBIANG et al} \boxx{15}{-10}{10}{50}{15} }
 %\fancyhead[LE]{\leavevmode \put(0,0){\color{myaqua}F. EYI-OBIANG et al (2015)}  \boxx{-45}{-10}{10}{50}{15} }
 \fancyfoot[C]{\leavevmode
 %\put(0,0){\color{lightaqua}\circle*{34}}
 %\put(0,0){\color{myaqua}\circle{34}}
 \put(-2.5,-3){\color{myaqua}\thepage}}

 \renewcommand{\headrule}{\hbox to\headwidth{\color{myaqua}\leaders\hrule height \headrulewidth\hfill}}
\section{Introduction}

{ \fontfamily{times}\selectfont
 \noindent % L'introduction commence ici

 In this paper, we consider stochastic processes $(X_{t})_{t\geq0}$ of the form 
\begin{equation}\label{equ1}
	X_{t}=M_{t}+V_{t},
\end{equation}
where $V$ is an adapted continuous finite variation process such that $dV_{t}$ is carried by a closed optional set. Such processes have received much attention in probability theory. For instance,  the equation \eqref{equ1} is referred to as Skorokhod's reflection equation when $X\geq0$ is continuous and $V$ is increasing and continuous with $dV_{t}$ carried by the set $\{t\geq0: X_{t}=0\}$. It plays an important role in martingale theory. For instance: the family of Azéma-Yor martingales, the resolution of Skorokhod's embedding problem, the study of Brownian local times. It also plays a key role in the study of zeros of continuous martingales. A large class containing all the previously mentioned processes is the class $(\Sigma)$ whose  definition is given in section 2. This class was introduced by Yor in \cite{y1} but some of its main properties were further studied in \cite{pat,naj,naj1,naj2,naj3,nik,mult}.

Recently, the study of stochastic processes of the form of Equation \eqref{equ1} was generalized in the field of stochastic calculus for signed measures. That is, the stochastic calculus when the measure space is governed, not by a probability measure, but by a general measure that can take positive and negative values (signed measure). In particular, a new class of processes satisfying Equation \eqref{equ1} was introduced in \cite{f} where the authors provide a general framework and methods based on the tools of the martingale theory for signed measures developed by Ruiz de Chavez in \cite{chav}. This class is called, class $\Sigma(H)$ and we shall define it in Section 3.

The aim of this paper is to bring some contributions in frameworks of two above mentioned classes of stochastic processes. More precisely, the Section 2 is dedicated to the study of the class $(\Sigma)$ and Section 3 is reserved to the study of the class $\Sigma(H)$. In particular, in Section 2, we shall provide a new characterization of processes of class $(\Sigma)$. We give also a new solution of Bachelier equation. In Section 3, we prove the following estimates for processes of the class $\Sigma(H)$, generalizing a well known result for the pair $(X_{t},A_{t})$  and for random variable $A_{\infty}$ where $X$ is a positive submartingale of the class $(\Sigma)$ and $A$ its non-decreasing process:
\begin{equation}
\Pv(\exists t\geq\overline{g}, X_{t}>\varphi(A_{t}))=1-\exp\left(-\int_{0}^{\infty}{\frac{dz}{\varphi(z)}}\right),
\end{equation}
where $\varphi$ is a positive Borel function and $\overline{g}$ is an honest time defined in Section 3. And	
\begin{equation}
	\Pv(A_{\infty}>x)=\exp\left(-\int_{0}^{x}\frac{dz}{\lambda(z)}\right), x<b,
\end{equation}
where $\lambda(x)=\E[X_{\infty}|A_{\infty}=x]$ and $b\equiv inf\{u: \Pv(A_{\infty}\geq u)=0\}$. We use these estimates to solve the Skorokhod embedding problem on the class $\Sigma(H)$. More precisely, we embed a non atomic measure $\nu$ in a process $X$ of the class $\Sigma(H)$. 
}
\section{Some contributions to the study of the class \texorpdfstring{$(\Sigma)$}{sigma}}

{ \fontfamily{times}\selectfont
 \noindent 
 \subsection{Characterization and properties}
 Let us recall the definition of class $(\Sigma)$. Throughout we fix a filtered probability space $(\Omega,(\F_{t})_{t\geq0},\F,\Pv)$ satisfying the usual conditions.
 \begin{defn}
 We say that a stochastic process $X$ is of class $(\Sigma)$ if it decomposes as $X=M+V$, where
\begin{enumerate}
	\item $M$ is a càdlàg local martingale,
	\item $V$ is an adapted continuous finite variation process starting at 0,
	\item $\int_{0}^{t}{1_{\{X_{u}\neq0\}}dV_{u}}=0$ for all $t\geq0$.
\end{enumerate}   
 \end{defn}
 Recall that a stochastic process $X$ is said to be of class $(D)$ if $\{X_{\tau}:\tau\hspace{0.15cm} is\hspace{0.15cm} a\hspace{0.15cm} finite\hspace{0.15cm} stopping\hspace{0.15cm} time\}$ is uniformly integrable. We shall say that $X$ is of class $(\Sigma D)$ if $X$ is of class $(\Sigma)$ and of class $(D)$.
 
Nikeghbali has established  a martingale characterization theorem which permits to characterize the submartingales of class $(\Sigma)$. We recall it in Theorem \ref{mart}. In what follows, we provide a new way to characterize the processes of class $(\Sigma)$.  
 
Throughout, for any process $X$, we shall denote $g_{t}=\sup\{s\leq t: X_{s}=0\}$.
 \begin{theorem}\label{abs}
 Let $X$ be a continuous process which vanishes at zero. Then,
 $$X\in(\Sigma) \Leftrightarrow |X|\in(\Sigma).$$
 \end{theorem}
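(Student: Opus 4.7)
The plan is to prove both implications by linking $X$ and $|X|$ through Itô--Tanaka / balayage, exploiting that the zeros of $X$ and $|X|$ coincide and that the finite variation component is concentrated on those zeros.

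For the implication $X\in(\Sigma)\Rightarrow |X|\in(\Sigma)$, I would apply Tanaka's formula at level $0$ to the continuous semimartingale $X=M+V$ vanishing at $0$, namely
\begin{equation*}
|X_t|=\int_0^t\mathrm{sgn}(X_s)\,dM_s+\int_0^t\mathrm{sgn}(X_s)\,dV_s+L_t^0(X),
\end{equation*}
with the convention $\mathrm{sgn}(0)=0$. Since $dV_s$ is carried by $\{X_s=0\}$, the middle integral vanishes identically. The first term is a continuous local martingale, and $L^{0}(X)$ is continuous, non-decreasing, null at $0$, with $dL^{0}(X)$ supported on $\{|X|=0\}$. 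This exhibits $|X|$ as an element of $(\Sigma)$.

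For the converse $|X|\in(\Sigma)\Rightarrow X\in(\Sigma)$, write $|X|=N+A$ with $N$ a continuous local martingale (continuity of $N$ is forced by continuity of $|X|$ and $A$), $A_0=0$, and $dA$ carried by $\{|X|=0\}=\{X=0\}$. The idea is to use the balayage formula to recover $X$ from $|X|$ by reinserting the excursion signs. Define a measurable ``excursion sign'' process
\begin{equation*}
\epsilon_u=\limsup_{\eta\downarrow 0}\mathrm{sgn}(X_{u+\eta}),
\end{equation*}
so that for every $t$ with $X_t\ne 0$, the time $g_t=\sup\{s\le t:X_s=0\}$ is the left end-point of the excursion containing $t$, on which $X$ has constant sign, giving $\epsilon_{g_t}=\mathrm{sgn}(X_t)$ and hence $\epsilon_{g_t}|X_t|=X_t$; the identity $\epsilon_{g_t}|X_t|=X_t$ is trivial on $\{X_t=0\}$. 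Applying the balayage formula to the continuous semimartingale $|X|$ vanishing at $0$ yields
\begin{equation*}
X_t=\epsilon_{g_t}|X_t|=\int_0^t\epsilon_{g_s}\,dN_s+\int_0^t\epsilon_{g_s}\,dA_s=:M_t+V_t,
\end{equation*}
where $M$ is a continuous local martingale (since $(\epsilon_{g_s})$ is predictable and bounded) and $V$ is continuous, adapted, of finite variation, null at $0$, with $dV$ dominated by $|dA|$, hence carried by $\{X=0\}$. This is precisely the decomposition required for $X\in(\Sigma)$.

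The main technical obstacle is the backward direction: one must justify that the process $(\epsilon_{g_s})$ is a legitimate integrand (bounded, predictable, constant on excursion intervals in the appropriate sense) and that the balayage formula applies in the form used above. This essentially reduces to a standard fact about continuous semimartingales vanishing at $0$ and processes of the form $s\mapsto Z_{g_s}$, so once this bookkeeping is done, the decomposition drops out and both conditions of class $(\Sigma)$ follow directly from the properties of $N$ and $A$.
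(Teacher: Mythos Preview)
Your approach coincides with the paper's: Tanaka for the forward direction, and recovering $X$ from $|X|$ via balayage against the excursion-sign process for the converse. The forward direction is identical to the paper's argument.

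The gap is in the backward step, precisely at the point you label ``bookkeeping''. You assert that $(\epsilon_{g_s})$ is predictable and apply the predictable balayage formula $\epsilon_{g_t}|X_t|=\int_0^t\epsilon_{g_s}\,d|X_s|$. But $\epsilon_u=\limsup_{\eta\downarrow 0}\mathrm{sgn}(X_{u+\eta})$ is only progressive, not predictable: at a left endpoint $s$ of an excursion one has $g_s=s$ and $\epsilon_{g_s}=\epsilon_s$ encodes the sign of the \emph{upcoming} excursion, which is not $\mathcal F_{s-}$-measurable (take $X$ a Brownian motion). So the predictable balayage identity you write down is not available.

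The paper handles this correctly by invoking the balayage formula in the \emph{progressive} case (Meyer--Stricker--Yor), which for a bounded progressive $K$ gives
\[
K_{g_t}|X_t|=\int_0^t{}^{p}(K_{g_\cdot})_s\,d|X_s|+R_t,
\]
where ${}^{p}(\cdot)$ denotes predictable projection and $R$ is a finite-variation process with $dR$ carried by $\{t:X_t=0\}$. Splitting $d|X|=dN+dA$ then shows $K_{g_t}|X_t|\in(\Sigma)$, and $K_{g_t}|X_t|=X_t$ finishes the argument. Your proof becomes correct once you replace the predictable balayage formula by this progressive version; the remainder $R$ is exactly what absorbs the failure of predictability and is harmless for class~$(\Sigma)$ membership.
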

 \begin{proof}
 $\Rightarrow)$ Let $X=M+V$ be a stochastic process of class $(\Sigma)$. An application of Itô-Tanaka formula gives
 $$|X_{t}|=\int_{0}^{t}{{\rm sgn}(X_{s})dX_{s}}+L_{t}^{0}$$
 where, $L_{t}^{0}$ is the local time of $X$ at zero. This implies that
 $$|X_{t}|=\int_{0}^{t}{{\rm sgn}(X_{s})dM_{s}}+\int_{0}^{t}{{\rm sgn}(X_{s})dV_{s}}+L_{t}^{0}.$$
 But,
 $$\int_{0}^{t}{{\rm sgn}(X_{s})dV_{s}}=0$$
 since $dV_{t}$ is carried by $\{t\geq0: X_{t}=0\}=\{t\geq0: {\rm sgn}(X_{t})=0\}$. Hence, it follows that:
 $$|X_{t}|=\int_{0}^{t}{{\rm sgn}(X_{s})dM_{s}}+L_{t}^{0}.$$
 Therefore, $|X|\in(\Sigma)$ since $L_{t}^{0}$ is an increasing and continuous process such that $dL_{t}^{0}$ is carried by $\{t\geq0: X_{t}=0\}$.\\
 $\Leftarrow)$ Let us set 
 $$K_{t}=\lim_{s\searrow t}\inf{(1_{X_{s}>0}-1_{X_{s}<0})}.$$
 $K$ is a progressive and bounded process and $^{p}K_{\cdot}$ denotes its predictable projection. Hence, we have from the balayage formula in the progressive case (see \cite{bal} or Proposition 2.1 of \cite{siam}) that
 $$K_{g_{t}}|X_{t}|=\int_{0}^{t}{^{p}(K_{g_{s}})d|X_{s}|}+R_{t}$$
 where $R_{t}$ is a bounded variations process and $dR_{t}$ is carried by $\{t\geq0: X_{t}=0\}$.
 Consequently, 
 $$K_{g_{t}}|X_{t}|\in(\Sigma).$$
 But, 
 $$K_{g_{t}}|X_{t}|=X_{t}.$$
 This completes the proof.
 \end{proof}
 \begin{rem}
 For any positive submartingale $X$ of class $(\Sigma)$, there exists a stochastic process $Y$ of class $(\Sigma)$ such that $X=|Y|$. 
 \end{rem}
 
 In what follows, we present some corollaries of Theorem \ref{abs} above. We shall denote $g=\sup\{t\geq0: X_{t}=0\}$. 
  \begin{theorem}
Let $X$ be a continuous process of class $(\Sigma)$ and $L$ its local time in zero. Let $f:\R_{+}\to\R_{+}$ be a locally bounded Borel function such that $f(L_{t})|X_{t}|$ is of class $(D)$ and $f(L_{t})|X_{t}|\to1$ almost surely when $t\to\infty$. Denote $F(x)=\int_{0}^{x}{f(s)ds}$ and $$K_{t}=\lim_{s\searrow t}\inf{(1_{\{X_{s}>0\}}-1_{\{X_{s}<0\}})}.$$ Then, the following holds.
\begin{enumerate}
	\item If $F(\infty)<\infty$, then $(K_{g_{t}})_{t\geq0}$ is a process of class $(\Sigma)$. More precisely, $K_{g_{t}}=f(0)X_{t}$ for all $t\geq0$.
	\item If $F(\infty)=\infty$, then $g<\infty$ and for every stopping time $T$, one has:
\begin{equation}
	f(L_{T})X_{T}=\E[K_{g}1_{\{g\leq T\}}|\mathcal{F}_{T}].
\end{equation}
\end{enumerate}
\end{theorem}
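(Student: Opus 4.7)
The natural strategy is to reduce to $|X|$ via Theorem~\ref{abs} and apply an Az\'ema-Yor-type martingale argument. By Theorem~\ref{abs}, $|X|\in(\Sigma)$, and its It\^o-Tanaka decomposition reads $|X|_t=\int_0^t{\rm sgn}(X_s)\,dM_s+L_t$, where $L$ is the local time of $X$ at $0$. Since $f(L_t)$ varies only on $\{X_t=0\}$, where $|X_t|=0$, integration by parts gives
$$f(L_t)|X_t|=F(L_t)+\int_0^t f(L_s)\,{\rm sgn}(X_s)\,dM_s,$$
so $Z_t:=f(L_t)|X_t|-F(L_t)$ is a continuous local martingale vanishing at $0$. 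This identity is the pivot for both cases.

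\emph{Case $F(\infty)<\infty$.} Then $F(L_t)\le F(\infty)$ is bounded, and together with the hypothesis $f(L_t)|X_t|\in(D)$ this makes $Z$ a uniformly integrable martingale. The a.s.\ convergence $f(L_t)|X_t|\to 1$ then gives $Z_\infty=1-F(L_\infty)\in L^1$ and
$$f(L_t)|X_t|-F(L_t)=\E[1-F(L_\infty)\mid\F_t],$$
with the normalisation $\E[F(L_\infty)]=1$ coming from $Z_0=0$. To translate this into the claim on $K_{g_t}$, I would multiply by $K_{g_t}$, use the identity $K_{g_t}|X_t|=X_t$ (already established in the proof of Theorem~\ref{abs}), and apply the balayage formula to the product $K_{g_t}F(L_t)$; matching the resulting terms should yield both that $(K_{g_t})\in(\Sigma)$ and the explicit form $K_{g_t}=f(0)X_t$.

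\emph{Case $F(\infty)=\infty$.} Here I would first establish $g<\infty$ a.s.: if $g=\infty$ with positive probability, continuity of $X$ forces $\liminf_t|X_t|=0$ on that event, so $f(L_t)|X_t|\to 1$ requires $f(L_t)\to\infty$; by local boundedness of $f$ this implies $L_\infty=\infty$, whence $F(L_\infty)=\infty$, which is incompatible with the a.s.\ convergence of the continuous local martingale $Z$. Once $g<\infty$, for $t\ge g$ both $K_{g_t}\equiv K_g$ and $L_t\equiv L_\infty$ are frozen, so on $\{g\le T\}$ one has $f(L_T)X_T=K_g f(L_\infty)|X_T|$. Taking the $\F_T$-conditional expectation, using the pointwise convergence $f(L_t)|X_t|\to 1$ to identify $f(L_\infty)|X_T|$ under the expectation, and observing that the $\{g>T\}$ contribution vanishes (because $X$ then still returns to $0$ and the frozen-sign argument fails), should produce
$$f(L_T)X_T=\E[K_g\,1_{\{g\le T\}}\mid\F_T].$$
The main obstacle is the algebraic identification $K_{g_t}=f(0)X_t$ in Case~1, which hides a delicate balayage computation for $K_{g_t}F(L_t)$; in Case~2 the subtle point is the control of the $\{g>T\}$ contribution inside the conditional expectation, which must be seen to disappear rather than produce a residual term.
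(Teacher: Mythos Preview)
Your overall architecture---reduce to $|X|$ via Theorem~\ref{abs}, then multiply by $K_{g_t}$ using the identity $K_{g_t}|X_t|=X_t$---is exactly the paper's. The difference is that the paper does not rebuild the Az\'ema--Yor martingale machinery from scratch: it simply applies Theorem~3.8 of \cite{pat} to the nonnegative class-$(\Sigma)$ process $|X|$, which delivers directly
\[
f(0)\,|X_t|=1\quad(\text{if }F(\infty)<\infty),\qquad f(L_T)\,|X_T|=\Pv[g\le T\mid\F_T]\quad(\text{if }F(\infty)=\infty).
\]
Multiplying these identities by $K_{g_t}$ (respectively $K_{g_T}$) and using $K_{g_t}|X_t|=X_t$ finishes both parts in one line; in Case~2 the paper simply pulls the $\F_T$-measurable factor $K_{g_T}$ inside the conditional expectation and replaces it by $K_g$ on $\{g\le T\}$.

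Your proposal has real gaps precisely where it departs from this route. In Case~1 you never reach the pointwise identity $f(0)|X_t|=1$: the martingale relation $f(L_t)|X_t|-F(L_t)=\E[1-F(L_\infty)\mid\F_t]$ is correct but does not by itself yield that statement, and the ``delicate balayage computation for $K_{g_t}F(L_t)$'' you invoke is not a workable path to $K_{g_t}=f(0)X_t$. In Case~2 the step ``using the pointwise convergence $f(L_t)|X_t|\to 1$ to identify $f(L_\infty)|X_T|$ under the expectation'' is not valid: $f(L_\infty)|X_T|$ is not equal to $1$ for finite $T$, and no optional-sampling argument converts the limit at infinity into the conditional probability $\Pv[g\le T\mid\F_T]$. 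The clean approach is the paper's: first obtain $f(L_T)|X_T|=\Pv[g\le T\mid\F_T]$ for $|X|$ from the cited result, and only then multiply by the sign process.
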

\begin{proof}
From Theorem \ref{abs} , $|X|$ is a nonnegative process of class $(\Sigma)$. Then, according to Theorem 3.8 of \cite{pat}
\begin{enumerate}
	\item If $F(\infty)<\infty$, then 
\begin{equation}\label{kg1}
	f(0)|X_{t}|=1,
\end{equation}
for all $t\geq0$.
	\item If $F(\infty)=\infty$, then $g<\infty$ and for every stopping time $T$
\begin{equation}\label{kg2}
	f(L_{T})|X_{T}|=\Pv[g\leq T|\mathcal{F}_{T}].
\end{equation}
\end{enumerate}
Hence, multiplying \eqref{kg1} and \eqref{kg2} respectively by $K_{g_{t}}$ and $K_{g_{T}}$
\begin{enumerate}
	\item If $F(\infty)<\infty$, then 
\begin{equation}
	f(0)K_{g_{t}}|X_{t}|=K_{g_{t}},
\end{equation}
for all $t\geq0$.
	\item If $F(\infty)=\infty$, then $g<\infty$ and for every stopping time $T$
\begin{equation}
	f(L_{T})K_{g_{T}}|X_{T}|=K_{g_{T}}\Pv[g\leq T|\mathcal{F}_{T}].
\end{equation}
\end{enumerate}
But $K_{g_{t}}|X_{t}|=X_{t}$. Furthermore,
$$K_{g_{T}}\Pv[g\leq T|\mathcal{F}_{T}]=\E[K_{g_{T}}1_{\{g\leq T\}}|\mathcal{F}_{T}]=\E[K_{g}1_{\{g\leq T\}}|\mathcal{F}_{T}].$$
Consequently, the theorem is proved.
\end{proof}
 Now, we shall give some new properties of the class $(\Sigma)$.
 \begin{theorem}\label{1}
 Let $X=M+V$ be a stochastic process of class $(\Sigma)$. Then, for any bounded predictable process $K$, 
 $$(K_{g_{t}}\cdot X_{t})\in (\Sigma).$$
 \end{theorem}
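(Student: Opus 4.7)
The plan is to reduce the claim to the predictable balayage formula applied to $X$. Writing $X=M+V$ for the decomposition of our process of class $(\Sigma)$ and using that $K$ is bounded and predictable, one invokes the predictable balayage formula on the closed optional set $\{X=0\}$ to obtain the identity
$$K_{g_t}\,X_t \;=\; \int_0^t K_{g_s}\,dX_s.$$
In contrast with the proof of Theorem~\ref{abs}, which used the progressive variant and therefore produced an extra finite variation remainder $R_t$ supported on $\{X=0\}$, here the predictability of $K$ makes such a remainder vanish and the clean identity above is available.

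Using the decomposition $X=M+V$, I would then rewrite
$$K_{g_t}\,X_t \;=\; \underbrace{\int_0^t K_{g_s}\,dM_s}_{=:\,\widetilde M_t} \;+\; \underbrace{\int_0^t K_{g_s}\,dV_s}_{=:\,\widetilde V_t},$$
and verify the three defining properties of class $(\Sigma)$ for $\widetilde M+\widetilde V$. Property (1): $\widetilde M$ is a local martingale because $K_{g_\cdot}$, in its appropriate left-continuous version, is locally bounded and predictable, so stochastic integration against the local martingale $M$ preserves that property. Property (2): $\widetilde V$ starts at $0$, is adapted, and is continuous, since the Stieltjes integral of a bounded integrand with respect to the continuous finite variation integrator $V$ inherits continuity. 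Property (3): as $dV_s$ is carried by $\{s : X_s = 0\}$ and $X_s=0$ forces $K_{g_s}\,X_s=0$, we have
$$\int_0^t \mathbf{1}_{\{K_{g_u}X_u\neq 0\}}\,d\widetilde V_u \;=\; \int_0^t \mathbf{1}_{\{K_{g_u}X_u\neq 0\}}\,K_{g_u}\,dV_u \;=\; 0.$$

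The step requiring the most care is the measurability bookkeeping needed to legitimise the balayage formula and to assert that $\int_0^\cdot K_{g_s}\,dM_s$ is genuinely a local martingale: one must select the correct left-continuous modification of the process $s\mapsto K_{g_s}$ so that it qualifies as a predictable integrand against a local martingale. This is standard in the balayage literature but deserves to be stated explicitly. Once this measurability point is pinned down, the rest of the argument is pure bookkeeping that follows directly from the defining properties of $M$ and $V$ in the class $(\Sigma)$.
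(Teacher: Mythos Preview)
Your proposal is correct and follows exactly the same route as the paper: apply the predictable balayage formula to obtain $K_{g_t}X_t=\int_0^t K_{g_s}\,dX_s$, split along $dX=dM+dV$, and read off the class $(\Sigma)$ structure. The paper's own proof is in fact terser---it stops at the displayed decomposition and says ``the assertion follows''---whereas you have spelled out the verification of properties (1)--(3) and flagged the predictability/measurability caveat for the integrand $K_{g_\cdot}$, which is a welcome addition.
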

 \begin{proof}
 An application of the balayage formula gives
 $$K_{g_{t}}X_{t}=\int_{0}^{t}{K_{g_{s}}dX_{s}}=\int_{0}^{t}{K_{g_{s}}dM_{s}}+\int_{0}^{t}{K_{g_{s}}dV_{s}}$$
 and the assertion follows.
 \end{proof}
 
 \begin{coro}
 Let $X=M+V$ be a stochastic process of class $(\Sigma)$. Then, for all measurable and bounded functions $f$,
 $$(f(V_{t})\cdot X_{t})\in (\Sigma).$$
 \end{coro}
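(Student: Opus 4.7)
The plan is to derive the corollary directly from Theorem \ref{1} by choosing the bounded predictable process $K$ to be $f(V)$ and then exploiting the fact that $V$ is constant on the excursion intervals of $X$ away from zero.

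First, I would verify the predictability hypothesis. Since $V$ is continuous (in particular left-continuous) and adapted, it is a predictable process; for any bounded Borel function $f$, the composition $K_t := f(V_t)$ is therefore a bounded predictable process. Hence the hypothesis of Theorem \ref{1} is satisfied.

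Second, I would establish the pointwise identity $f(V_t) = f(V_{g_t})$ for every $t \geq 0$, where $g_t = \sup\{s \leq t : X_s = 0\}$. This follows from the defining property $\int_0^t \mathbf{1}_{\{X_u \neq 0\}}\, dV_u = 0$: the measure $dV$ is carried by $\{s : X_s = 0\}$, so $V$ is constant on every connected component of $\{s : X_s \neq 0\}$. In particular, if $X_t \neq 0$, then $V$ is constant on $[g_t, t]$ and $V_t = V_{g_t}$; if $X_t = 0$, then $g_t = t$ and the identity is trivial.

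Finally, Theorem \ref{1} applied to $K = f(V)$ gives $K_{g_t}\cdot X_t \in (\Sigma)$, and by the identity above $K_{g_t}\cdot X_t = f(V_{g_t}) X_t = f(V_t) X_t$, which yields the claim. There is essentially no obstacle here; the only thing to notice carefully is that we need $V_t = V_{g_t}$ (coming from the carrying property of $dV$) in order to reduce $f(V_t) X_t$ to the form $K_{g_t} X_t$ to which Theorem \ref{1} applies.
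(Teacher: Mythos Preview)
Your proof is correct and follows exactly the paper's approach: the paper's entire argument is the single line ``It suffices to see that $V_{t}=V_{g_{t}}$,'' which is precisely the identity you establish and then feed into Theorem~\ref{1}. Your write-up simply makes explicit the predictability of $f(V)$ and the reason why $V_t=V_{g_t}$, which the paper leaves to the reader.
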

 \begin{proof}
 It suffices to see that $V_{t}=V_{g_{t}}$. 
 \end{proof}
 
 The authors of \cite{pat} showed that under some assumptions, a submartingale of class $(\Sigma)$ can be written as $X_{t}=\E[X_{\infty}1_{\{g\leq t\}}|\mathcal{F}_{t}]$, where $g=\sup\{t\geq0: X_{t}=0\}$ and $\lim_{t\to \infty}{X_{t}}=X_{\infty}$. Here, we present a new corollary of this result.
 \begin{coro}
 Let $X$ be a submartingale of class $(\Sigma D)$ and $K$ be a bounded predictable process. Then, there exists an integrable random variable $X_{\infty}$ such that $\lim_{t\to \infty}{X_{t}}=X_{\infty}$ almost everywhere on the set $\{g<\infty\}$ and
\begin{equation}
	 K_{g_{t}} X_{t}=\E[K_{g}X_{\infty}1_{\{g<t\}}|\mathcal{F}_{t}].
\end{equation}
 \end{coro}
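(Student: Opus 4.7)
The plan is to derive the formula by combining the submartingale representation from \cite{pat} recalled just before the corollary with the $\mathcal{F}_t$-measurability of $K_{g_t}$. Applied directly to $X$ itself, which by hypothesis lies in $(\Sigma D)$, that representation yields an integrable random variable $X_\infty$ with $\lim_{t\to\infty}X_t=X_\infty$ almost everywhere on $\{g<\infty\}$ and
\[
X_t = \E\bigl[X_\infty\, 1_{\{g\leq t\}}\mid \mathcal{F}_t\bigr].
\]

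Next I would verify that $K_{g_t}$ is $\mathcal{F}_t$-measurable. Since $K$ is predictable, it is in particular progressively measurable; combined with the fact that $g_t$ is an $\mathcal{F}_t$-measurable random variable taking values in $[0,t]$ (as the end of the optional zero set of $X$ restricted to $[0,t]$), the evaluation $\omega\mapsto K(g_t(\omega),\omega)$ is $\mathcal{F}_t$-measurable. Because $K$ is also bounded, one may then pull $K_{g_t}$ inside the conditional expectation to obtain
\[
K_{g_t} X_t = \E\bigl[K_{g_t}X_\infty\, 1_{\{g\leq t\}}\mid \mathcal{F}_t\bigr].
\]

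To conclude, I would use the pointwise identity $K_{g_t}\,1_{\{g\leq t\}} = K_g\,1_{\{g\leq t\}}$, which is immediate from the definitions: on $\{g\leq t\}$ one has $g_t = g$. Substituting this into the previous display delivers the claimed formula (reading the indicator as $1_{\{g\leq t\}}$; it coincides with the $1_{\{g<t\}}$ in the statement up to a $\Pv$-null event whenever $\Pv(g=t)=0$). There is no genuine obstacle beyond this measurability step. An alternative route would be to apply Theorem \ref{1} to conclude that $K_{g_t}X_t$ lies in $(\Sigma)$ and then invoke the representation theorem directly on this process; however this is less convenient, because $K_{g_t}X_t$ need not be a submartingale when $K$ may take negative values, so the hypotheses required by \cite{pat} would have to be re-established.
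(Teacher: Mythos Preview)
Your proof is correct, but it follows a genuinely different route from the paper. The paper proceeds exactly along what you describe as the ``alternative route'': it invokes the preceding Theorem~\ref{1} (the balayage argument) to deduce that $K_{g_t}X_t$ belongs to $(\Sigma)$, observes that boundedness of $K$ preserves the class $(D)$ property, and then applies the Cheridito--Nikeghbali--Platen representation (Theorem~3.1 and Remark~3.7 of \cite{pat}) directly to the process $K_{g_t}X_t$. Your argument, by contrast, applies the representation once to $X$ itself and then multiplies through by the $\mathcal{F}_t$-measurable factor $K_{g_t}$, finishing with the pointwise identity $K_{g_t}1_{\{g\leq t\}}=K_g1_{\{g\leq t\}}$.

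Your route is arguably cleaner: it avoids balayage altogether and needs only the measurability check you spell out. Your reservation about the paper's route, however, is not quite fatal. While you are right that $K_{g_t}X_t$ need not be a \emph{submartingale} when $K$ changes sign (and the paper is indeed sloppy in calling it one), the representation result in \cite{pat} applies to processes of class $(\Sigma D)$ with finite-variation part $V$, not only to submartingales; so the paper's approach does go through once that wording is corrected. What the paper's method buys is that it presents the corollary as a direct application of Theorem~\ref{1}, reinforcing the stability of class $(\Sigma)$ under the map $X\mapsto K_{g_\cdot}X$; what your method buys is economy and a transparent identification of the terminal variable $K_g X_\infty$.
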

 \begin{proof}
 Since $X$ is a stochastic process of class $(D)$ and $K$ is a bounded process, we obtain from Theorem \ref{1} that $K_{g_{t}} X_{t}$ is a submartingale of class $(\Sigma)$. It follows that $K_{g_{t}} X_{t}$ is again a process of class $(D)$. Then, Theorem 3.1 and Remark 3.7 of \cite{pat} permit to conclude the proof.
 \end{proof}
 
 %We give an other corollary in the following theorem.
 \begin{theorem}
 Let $k>0$ be a constant and $M$ a positive local martingale such that
 $$\lim_{t\to +\infty}{M_{t}=0}\hspace{1cm} almost\hspace{0.1cm}surely.$$
Set $g^{k}=\sup\{t\geq0: M_{t}=k\}$. Then,
 \begin{equation}
	\Pv[g^{k}>t|\mathcal{F}_{t}]=1\wedge\left(\frac{M_{t}}{k}\right).
 \end{equation}
 \end{theorem}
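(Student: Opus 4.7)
The strategy is to recast $\{g^{k}>t\}$ as a hitting-time event after time $t$ and then invoke optional stopping on an appropriately stopped portion of $M$. Set
$$
T := \inf\{s\geq t : M_{s}\geq k\}, \qquad \inf\emptyset = +\infty.
$$
The first step is to verify that $\{g^{k}>t\} = \{T<\infty\}$ up to a $\Pv$-null set. Since $M$ is a nonnegative (continuous) local martingale with $M_{\infty}=0<k$, whenever $T<\infty$ continuity forces $M_{T}=k$ and, since $M$ eventually returns to $0$, it must again equal $k$ somewhere after $T$, giving $g^{k}\geq T\geq t$; conversely on $\{T=\infty\}$ one has $M_{s}<k$ for all $s\geq t$, so $g^{k}\leq t$. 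The boundary event $\{M_{t}=k,\ M_{s}<k\text{ for all }s>t\}$ is ruled out by the oscillatory behaviour of a continuous local martingale at its level sets.

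With that identification in hand, I would then work on the event $\{M_{t}<k\}$ and consider the stopped process $Y_{s}:=M_{(s\vee t)\wedge T}$, $s\geq 0$. By continuity of $M$, the values of $Y$ lie in $[0,k]$, so this nonnegative local martingale is bounded, hence a uniformly integrable martingale, with terminal value
$$
Y_{\infty} = k\,\1_{\{T<\infty\}} + M_{\infty}\,\1_{\{T=\infty\}} = k\,\1_{\{T<\infty\}}.
$$
Optional stopping conditional on $\F_{t}$ gives $\E[Y_{\infty}\mid\F_{t}] = M_{t}$, whence
$$
\Pv[T<\infty\mid\F_{t}] = \frac{M_{t}}{k} \quad\text{on }\{M_{t}<k\}.
$$

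On the complementary event $\{M_{t}\geq k\}$ one has $T=t$ deterministically, so $\Pv[T<\infty\mid\F_{t}]=1$ there. Combining the two regimes yields the announced identity
$$
\Pv[g^{k}>t\mid\F_{t}] = \Pv[T<\infty\mid\F_{t}] = 1\wedge\left(\frac{M_{t}}{k}\right).
$$
The main obstacle I expect is the first step, namely the careful verification that $\{g^{k}>t\}$ and $\{T<\infty\}$ coincide up to a $\Pv$-null set, which requires treating the boundary event $\{M_{t}=k\}$ using the oscillation behaviour of continuous local martingales at their level sets. Once this set-theoretic identification is in place, the rest is a textbook application of optional stopping to a bounded nonnegative martingale.
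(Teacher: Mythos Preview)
Your argument is correct and is essentially the classical optional-stopping derivation of Doob's maximal identity: identify $\{g^{k}>t\}$ with the event that $M$ hits level $k$ after time $t$, then stop the (continuous, nonnegative) local martingale at this hitting time to obtain a bounded, hence uniformly integrable, martingale and read off the hitting probability from its terminal value. The only delicate point---the null set $\{g^{k}=t\}$---is handled just as loosely in the paper, which passes from $\Pv[g^{k}<t\mid\F_{t}]$ to $1-\Pv[g^{k}>t\mid\F_{t}]$ without comment.

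The paper, however, takes a genuinely different route. It observes that $(k-M_{t})^{+}$ is a nonnegative submartingale of class $(\Sigma D)$ (Tanaka's formula produces a local-time increasing part carried by $\{M=k\}\subset\{(k-M)^{+}=0\}$), with terminal value $(k-M_{\infty})^{+}=k$. It then applies the last-passage representation for class $(\Sigma D)$ processes (Cheridito--Nikeghbali--Platen): $X_{t}=\E[X_{\infty}\1_{\{g<t\}}\mid\F_{t}]$, which here reads $(k-M_{t})^{+}=k\,\Pv[g^{k}<t\mid\F_{t}]$, and the formula follows by taking complements. So the paper deduces the result as an illustration of the $(\Sigma)$ machinery developed in the surrounding section, whereas your proof is elementary and self-contained, needing only optional stopping for bounded martingales. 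Your approach is more portable; theirs highlights that this classical identity is a special case of the general last-passage formula for class $(\Sigma)$.
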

 \begin{proof}
 Note that $(k-M_{t})^{+}\in(\Sigma D)$ and $k=(k-M_{\infty})^{+}$. Hence,
 $$(k-M_{t})^{+}=\E[k\cdot1_{\{g^{k}<t\}}|\mathcal{F}_{t}]$$
 $$\hspace{1.4cm}=k\cdot\Pv[g^{k}<t|\mathcal{F}_{t}].$$
 It follows that
 $$\left(1-\frac{M_{t}}{k}\right)^{+}=\Pv[g^{k}<t|\mathcal{F}_{t}],$$
 thus
 $$\Pv[g^{k}>t|\mathcal{F}_{t}]=1-\left(1-\frac{M_{t}}{k}\right)^{+}=1\wedge\left(\frac{M_{t}}{k}\right).$$
 \end{proof}
 
 \subsection{The Bachelier equation and the class \texorpdfstring{$(\Sigma)$}{sigma}}
 Now, we shall recall a result of \cite{nik}. It is the following martingale characterization for the submartingales of class $(\Sigma)$.
 \begin{theorem}\label{mart}
 Let $X=M+V$ be a local submartingale, where $M$ is a local martingale and $V$ is the non-decreasing part of $X$ in the Doob-Meyer decomposition. The following are equivalent:
\begin{enumerate}
	\item The local submartingale $X$ is of class $(\Sigma)$.
	\item For every locally bounded Borel function $f$, and $F(x)\equiv\int_{0}^{x}{f(z)dz}$, the process
	$$W_{t}^{F}(X)=F(V_{t})-f(V_{t})X_{t}$$
	is a local martingale.
\end{enumerate}
 \end{theorem}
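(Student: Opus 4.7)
The plan is to derive both implications from a single integration-by-parts identity that exhibits $W^F_t(X)$ as the sum of an $\F_0$-measurable constant, a stochastic integral against $M$, and a continuous finite variation term. Since $V$ is continuous with finite variation, $[V,X]\equiv 0$. Applying integration by parts to $f(V_t)X_t$ for $f\in C^1$, and using the change-of-variables formula $\int_0^t f(V_s)\,dV_s = F(V_t)$ (valid since $V_0=0$) together with the decomposition $dX_s = dM_s + dV_s$, one arrives at the key identity
\begin{equation*}
W_t^F(X) = -f(0)X_0 - \int_0^t f(V_s)\,dM_s - \int_0^t X_s f'(V_s)\,dV_s. \tag{$\ast$}
\end{equation*}

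For the direction $(1)\Rightarrow(2)$, the class $(\Sigma)$ hypothesis says $dV_s$ is carried by $\{X_s=0\}$, so the final term of $(\ast)$ vanishes identically, and $W^F_t(X)$ becomes a local martingale whenever $f\in C^1$. To extend the conclusion to arbitrary locally bounded Borel $f$, I would run a monotone class argument: the set of test functions $f$ for which $W^F_t(X)$ is a local martingale forms a vector space stable under uniformly bounded pointwise limits (via dominated convergence for stochastic integrals against $M$), contains $C^1$, and therefore contains all locally bounded Borel functions.

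For the direction $(2)\Rightarrow(1)$, it is enough to test the hypothesis against $f\in C^1$. Since $W^F_t(X)$ is then a local martingale, the uniqueness of the canonical decomposition of a special semimartingale, applied to $(\ast)$, forces the continuous finite variation process $\int_0^{\cdot} X_s f'(V_s)\,dV_s$ to be indistinguishable from zero. Letting $f'$ range over continuous functions yields
\begin{equation*}
\int_0^t X_s\,g(V_s)\,dV_s = 0 \quad \text{a.s., for every continuous $g$ and all $t\geq 0$}.
\end{equation*}

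The main obstacle will be the final upgrade from this identity to $\int_0^t 1_{\{X_s\neq 0\}}\,dV_s = 0$: because $V$ may have flat intervals, it is not injective and one cannot directly test against arbitrary Borel functions of time. I expect to overcome this by observing that flat intervals of $V$ carry no $dV$-mass, so the push-forward of the signed measure $X_s\,dV_s$ by $V$ is well-defined and, by the displayed identity, has vanishing integrals against all continuous functions; a standard density argument then forces this push-forward, and consequently $X_s\,dV_s$ itself, to be the zero measure, which is precisely condition (3) of the definition of class $(\Sigma)$.
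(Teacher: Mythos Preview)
The paper does not prove this theorem; it merely recalls it from Nikeghbali \cite{nik}. There is therefore no proof in the paper to compare against. That said, your approach via the integration-by-parts identity $(\ast)$ is the standard one, and in fact the paper reproduces exactly your $(1)\Rightarrow(2)$ computation (integration by parts for $f\in C^1$, then a monotone class argument) later on, inside the proof of Theorem~\ref{thsko}.

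Your argument is correct, but the closing step of $(2)\Rightarrow(1)$ is more elaborate than it needs to be. Once you know
\[
\int_0^t X_s\,g(V_s)\,dV_s=0\qquad\text{for all continuous }g\text{ and all }t\ge 0,
\]
take $g\equiv 1$ (i.e.\ $f(x)=x$ in $(\ast)$). Then $t\mapsto\int_0^t X_s\,dV_s$ vanishes identically. Since $V$ is non-decreasing here, $dV_s$ is a \emph{positive} measure, and a measurable function whose indefinite integral against a positive measure is identically zero must vanish almost everywhere with respect to that measure. Hence $X_s=0$ for $dV_s$-a.e.\ $s$, which is precisely $\int_0^t 1_{\{X_s\neq 0\}}\,dV_s=0$. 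No push-forward construction or discussion of the injectivity of $V$ is required.
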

 
 Throughout the rest of this section, we focus our interest to the study of the local martingale $W_{t}^{F}(X)$. More precisely, we shall show that this local martingale solves the following Bachelier equation:
 
\begin{equation}
	dY_{t}=\varphi(\overline{Y}_{t})dN_{t}, \hspace{1cm} Y_{0}=0
\end{equation}
where $N=-M$, $M$ being the martingale part of $X$ and $\overline{Y}_{t}=\sup_{s\leq t}{Y_{s}}$. Let us begin with deriving some properties of the process $W_{t}^{F}(X)$.

 \begin{prop}\label{sup}
 Let $X=M+A$ be a positive local submartingale of class $(\Sigma)$, $f$ and $g$ two locally bounded and positive Borel functions. Let $G(x)=\int_{0}^{x}{g(z)dz}$ and $F(x)=\int_{0}^{x}{f(z)dz}$. Then, one has the following
\begin{equation}
\overline{W}_{t}^{F}(X)=F(A_{t});
\end{equation}
and
\begin{equation}
W^{G}_{t}(f(A_{\cdot})X_{\cdot})=W^{G\circ F}_{t}(X).
\end{equation}
 
\end{prop}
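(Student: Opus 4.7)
The plan is to treat the two identities separately, since part (1) is essentially a consequence of the positivity and the fact that $A$ only grows on the zero set of $X$, while part (2) is an algebraic identity that reduces to the chain rule once the canonical decomposition of $f(A_\cdot)X_\cdot$ is identified.

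For part (1), I would first observe that since $f\geq 0$ and $X\geq 0$, we have
$$W_s^F(X)=F(A_s)-f(A_s)X_s\leq F(A_s)\leq F(A_t)\quad\text{for all }s\leq t,$$
because $F$ is non-decreasing and $A$ is non-decreasing. This yields the easy inequality $\overline{W}_t^F(X)\leq F(A_t)$. For the reverse inequality I would introduce $g_t=\sup\{s\leq t: X_s=0\}$; since $X$ is continuous and vanishes at $0$, continuity gives $X_{g_t}=0$ and the support property $\int 1_{\{X\neq 0\}}dA=0$ gives $A_t=A_{g_t}$. Hence
$$W_{g_t}^F(X)=F(A_{g_t})-f(A_{g_t})\,X_{g_t}=F(A_t),$$
so $\overline{W}_t^F(X)\geq F(A_t)$ and equality follows.

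For part (2), I would first identify the Doob--Meyer decomposition of $Y_t:=f(A_t)X_t$. Applying the balayage formula (exactly as in the proof of Theorem \ref{1}, where $K=f(A_\cdot)$ is predictable since $A$ is continuous and adapted, and $A_{g_t}=A_t$) gives
$$Y_t=\int_0^t f(A_s)\,dM_s+\int_0^t f(A_s)\,dA_s=N_t+F(A_t),$$
where $N$ is a local martingale and the continuous non-decreasing part $F(A_\cdot)$ is carried by $\{X=0\}\subseteq\{Y=0\}$. Thus $Y\in(\Sigma)$ with finite variation part $F(A_\cdot)$. Plugging this into the definition of $W^G$ yields
$$W_t^G(Y)=G(F(A_t))-g(F(A_t))\,f(A_t)\,X_t.$$
The right-hand side of the claimed identity is
$$W_t^{G\circ F}(X)=(G\circ F)(A_t)-(G\circ F)'(A_t)\,X_t,$$
and since $F(x)=\int_0^x f(z)\,dz$ and $G(y)=\int_0^y g(z)\,dz$, the change of variables formula gives $(G\circ F)(x)=\int_0^x g(F(z))f(z)\,dz$, whose a.e.\ derivative is $g(F(x))f(x)$. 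The two expressions then coincide.

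The only delicate point is the lower bound in part (1), where one must argue that $g_t$ is attained as a maximizer; strictly speaking one should also dispose of the boundary case $g_t=0$, which is immediate since then $A_t=0$ and $F(A_t)=0\leq \overline{W}_t^F(X)$. Everything else is an application of already-established tools (balayage formula, Theorem \ref{1}) and elementary calculus.
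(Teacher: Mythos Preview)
Your proof is correct and follows essentially the same approach as the paper. For part (2) the arguments are identical: identify the increasing part of $f(A_\cdot)X_\cdot$ as $F(A_\cdot)$ and conclude by the chain rule $(G\circ F)'=f\cdot(g\circ F)$. For part (1) the upper bound is the same; for the lower bound the paper observes that at any point of increase of $A$ one has $X=0$ and hence $W^F(X)=F(A)$, whereas you reach the same conclusion via the last zero $g_t$ (using $A_{g_t}=A_t$ and $X_{g_t}=0$) --- these are two phrasings of the same idea.
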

\begin{proof}
We have for all $t\geq0$,
$$F(A_{t})=(F(A_{t})-f(A_{t})X_{t})+f(A_{t})X_{t}.$$
Hence,
$$F(A_{t})=W_{t}^{F}(X)+f(A_{t})X_{t}.$$
Since for all $t\geq0$, $f(A_{t})X_{t}\geq0$ because $f$ is a positive function and $X$ is a nonnegative stochastic process, it follows that for all $t\geq0$,
$$W_{t}^{F}(X)\leq F(A_{t}).$$
Moreover, if $t$ is  a point of increase of $A_{t}$, one has:
$$W_{t}^{F}(X)=F(A_{t}).$$
It follows that 
$$\overline{W}_{t}^{F}(X)=F(A_{t}).$$
Note that $f(A_{t})X_{t}$ is a submartingale of class $(\Sigma)$ with non-decreasing part $F(A_{t})$. Hence, we have
$$W^{G}_{t}(f(A_{\cdot})X_{\cdot})=G(F(A_{t}))-g(F(A_{t}))f(A_{t})X_{t}.$$
Since, the derivative of $G\circ F$ is $(G\circ F)^{'}(x)=f(x)g(F(x))$, we obtain 
$$W^{G}_{t}(f(A_{\cdot})X_{\cdot})=G(F(A_{t}))-(G\circ F)^{'}(A_{t})X_{t}.$$
Therefore,
$$W^{G}_{t}(f(A_{\cdot})X_{\cdot})=W^{G\circ F}_{t}(X).$$
\end{proof} 

\begin{coro}\label{csup}
Let $X=M+A$ be a submartingale of class $(\Sigma)$. Let us put $N=-M$. Then, 
$$A_{t}=\sup_{s\leq t}{N_{s}}.$$
\end{coro}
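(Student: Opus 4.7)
The plan is to apply Proposition \ref{sup} with the simplest possible choice, namely $F(x)=x$, which corresponds to the locally bounded positive Borel function $f \equiv 1$. With this choice the process $W^{F}_{t}(X)$ collapses:
\[
W^{F}_{t}(X)=F(A_{t})-f(A_{t})X_{t}=A_{t}-X_{t}=A_{t}-(M_{t}+A_{t})=-M_{t}=N_{t}.
\]
Hence, by the first identity of Proposition \ref{sup},
\[
\sup_{s\le t}N_{s}=\sup_{s\le t}W^{F}_{s}(X)=\overline{W}_{t}^{F}(X)=F(A_{t})=A_{t},
\]
which is exactly the claim.

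So the proof is essentially a one-line substitution. The only point that requires care is that Proposition \ref{sup} is stated for \emph{positive} submartingales of class $(\Sigma)$, and this is where I expect the main (minor) obstacle: either the corollary is implicitly assumed in the positive setting (inherited from the proposition that immediately precedes it), or one has to verify the two inequalities directly. In the latter case, positivity of $X$ gives $N_{s}=-M_{s}\le A_{s}\le A_{t}$ for $s\le t$, yielding $\sup_{s\le t}N_{s}\le A_{t}$. For the reverse inequality, one uses that $dA$ is carried by $\{X=0\}$: letting $g_{t}=\sup\{s\le t:X_{s}=0\}$ one has $A_{t}=A_{g_{t}}$ by the support property, and at $g_{t}$, $X_{g_{t}}=0$ (by continuity) so $N_{g_{t}}=-M_{g_{t}}=A_{g_{t}}=A_{t}$, giving $\sup_{s\le t}N_{s}\ge A_{t}$.

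I would write it up in the short form using Proposition \ref{sup}, flagging the choice $F=\mathrm{id}$ as the key computation; the verification above is just a sanity check that the hypothesis of positivity is indeed what makes the identity $\overline{W}^{F}(X)=F(A)$ available in this context.
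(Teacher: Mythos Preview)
Your argument is correct and is essentially the paper's own approach: the paper also derives $N_{t}=W^{\mathrm{id}}_{t}(X)$ and then reads off $\sup_{s\le t}N_{s}=A_{t}$ from the first identity of Proposition~\ref{sup}, except that it gets there via the detour $W^{F^{-1}}_{t}(f(A_{\cdot})X_{\cdot})=W^{F^{-1}\circ F}_{t}(X)=W^{\mathrm{id}}_{t}(X)$ for a generic positive $f$, which collapses to your one-line computation when $f\equiv 1$. Your direct choice $F=\mathrm{id}$ is cleaner and loses nothing.
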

\begin{proof}
Let $f$ be a positive, locally bounded Borel function. Let $F(x)=\int_{0}^{x}{f(z)dz}$. From Proposition \ref{sup}, we get
$$W^{F^{-1}}_{t}(f(A_{\cdot})X_{\cdot})=N_{t}.$$
Hence, by Proposition \ref{sup}, again
$$\sup_{s\leq t}{N_{s}}=\overline{W}^{F^{-1}}_{t}(f(A_{\cdot})X_{\cdot})=F^{-1}(F(A_{t})).$$
Consequently,
$$A_{t}=\sup_{s\leq t}{N_{s}}.$$
\end{proof}

\begin{theorem}
Let $X=M+A$ be a submartingale of class $(\Sigma)$ and $N_{t}=-M_{t}$. Consider a positive Borel function $\varphi:[0,+\infty)\longrightarrow(0,+\infty)$, such that $\frac{1}{\varphi}$ is locally integrable. Let $V(y)=\int_{0}^{y}\frac{ds}{\varphi(s)}$, and $U$ be its inverse defined on $(0,V(\infty))$. Then the stochastic process $Y_{t}=W^{U}_{t}(X)$, $\forall t< \tau^{\infty}_{N}$ is a strong, pathwise unique solution of the Bachelier equation
\begin{equation}
	dY_{t}=\varphi(\overline{Y}_{t})dN_{t}, \hspace{1cm} Y_{0}=0.
\end{equation}
\end{theorem}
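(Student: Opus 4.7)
The plan is to verify existence by a direct stochastic-calculus computation that collapses, via the defining property of class $(\Sigma)$ and Proposition \ref{sup}, to the claimed equation; uniqueness I would then deduce from the Skorokhod reflection structure intrinsic to Bachelier's equation, exploiting the monotonicity of $V$ and $U$.

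For the existence part, I would set $Y_t := W^U_t(X) = U(A_t) - U'(A_t) X_t$ and note that since $U$ is the inverse of $V$ on $(0, V(\infty))$ with $V' = 1/\varphi$, one has $U' = \varphi \circ U$. Because $A$ is continuous of finite variation, the ordinary product rule gives
\begin{equation*}
dY_t = U'(A_t)\,dA_t - U''(A_t) X_t\,dA_t - U'(A_t)\,dX_t.
\end{equation*}
The class $(\Sigma)$ property $X_t\,dA_t = 0$ kills the second term, and after substituting $dX_t = dM_t + dA_t$ the $dA$-contributions cancel, leaving $dY_t = -U'(A_t)\,dM_t = U'(A_t)\,dN_t$. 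By Proposition \ref{sup}, $\overline{Y}_t = U(A_t)$, so $U'(A_t) = \varphi(U(A_t)) = \varphi(\overline{Y}_t)$, and $Y$ therefore solves the Bachelier equation with $Y_0 = 0$. (As a sanity check, Theorem \ref{mart} already guarantees $Y$ is a local martingale up to $\tau^{\infty}_N$, which is consistent.)

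For pathwise uniqueness, let $\tilde Y$ be any continuous solution of $d\tilde Y_t = \varphi(\overline{\tilde Y}_t)\,dN_t$ with $\tilde Y_0 = 0$ on $[0,\tau^{\infty}_N)$. I would observe that $R_t := \overline{\tilde Y}_t - \tilde Y_t \geq 0$, that $\overline{\tilde Y}$ is continuous and non-decreasing starting at $0$, and that $d\overline{\tilde Y}_t$ is carried by $\{R_t = 0\}$. Hence $(\tilde Y, \overline{\tilde Y})$ is the unique Skorokhod reflection at $0$ of $\int_0^{\cdot} \varphi(\overline{\tilde Y}_s)\,dN_s$, which yields the fixed-point identity
\begin{equation*}
\overline{\tilde Y}_t = \sup_{s\leq t}\int_0^s \varphi(\overline{\tilde Y}_u)\,dN_u.
\end{equation*}
Applying the strictly increasing change of variables $y \mapsto V(y)$ and the time change $\sigma_u = \inf\{t : \overline{\tilde Y}_t > u\}$ linearises the reflection equation in the transformed variable and forces $\overline{\tilde Y}_t = U(\overline{N}_t)$ on $[0,\tau^{\infty}_N)$. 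Combined with $d\tilde Y = \varphi(\overline{\tilde Y})\,dN = U'(\overline{N})\,dN$ and Corollary \ref{csup} (which identifies $\overline{N}$ with the non-decreasing part of the class $(\Sigma)$ submartingale it generates), one recovers $\tilde Y = Y$.

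The main obstacle is this uniqueness step: the driver $\varphi(\overline{Y})$ depends on the running supremum rather than on $Y$ itself, so no standard SDE uniqueness theorem applies directly. The Skorokhod-reflection reformulation above, together with the $V$-substitution that turns the non-Lipschitz dependence on $\overline{Y}$ into a tractable monotone problem, is where the real work sits; one also has to track the domain constraint $t < \tau^{\infty}_N$ carefully so that $\overline{N}_t$ remains in the domain of $U$.
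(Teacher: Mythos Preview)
Your existence computation is correct and self-contained, but the paper does not argue this way. Instead, it observes via Corollary~\ref{csup} that $A_t=\overline{N}_t$, hence $X_t=\overline{N}_t-N_t$ with $N$ max-continuous; this identifies $W^U_t(X)=U(\overline{N}_t)-U'(\overline{N}_t)(\overline{N}_t-N_t)$ as an Az\'ema--Yor process in the sense of \cite{AY3}, and the paper then simply invokes Theorem~3.1 of \cite{AY3} to obtain both existence and strong pathwise uniqueness at once. Your route unpacks what that citation contains: the Skorokhod-reflection reformulation and the $V$-substitution you outline for uniqueness are precisely the mechanism behind the result in \cite{AY3}, so you are in effect re-deriving their theorem rather than invoking it. This buys transparency about where the analytic difficulty lies, at the cost of length; the paper's approach is a three-line reduction.

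One technical caveat on your existence step: writing $d(U'(A_t))=U''(A_t)\,dA_t$ presupposes $U\in C^2$, i.e.\ $\varphi\in C^1$, which is not assumed. For a merely Borel $\varphi$ you should either appeal directly to Theorem~\ref{mart} (which already yields $dY_t=-U'(A_t)\,dM_t$ for locally bounded $U'$, bypassing $U''$ entirely), or run the $C^1$ computation first and pass to the limit by a monotone-class argument. Your uniqueness sketch is along the right lines but, as you acknowledge, remains a sketch; the step ``linearises the reflection equation and forces $\overline{\tilde Y}_t=U(\overline{N}_t)$'' hides the genuine work that \cite{AY3} carries out.
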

\begin{proof}
It follows from Corollary \ref{csup} that $N$ is a max-continuous local martingale and 
$$X_{t}=\overline{N}_{t}-N_{t}.$$
Then, it follows thanks to Definition 2.1 of \cite{AY3} that $W^{U}_{t}(X)$ is an Azema-Yor martingale. Therefore, we obtain by Theorem 3.1 of \cite{AY3} that $W^{U}_{t}(X)$ is a strong, pathwise unique, max-continuous solution for $t< \tau^{\infty}_{N}$.
\end{proof}
 }

\section{Some contributions to the study of the class \texorpdfstring{$\Sigma(H)$}{sigma(H)}}

{ \fontfamily{times}\selectfont
 \noindent 
\subsection{Preliminaries and notations} 
We start by giving some notations which will be used in this section. Consider a measure space $(\Omega, \mathcal{F}_{\infty}, \Qv)$, where $\Qv$ is a bounded signed measure. Let $\Pv$ be a probability measure on $\mathcal{F}_{\infty}$ such that $\Qv\ll\Pv$. We shall throughout use the following notations:
\begin{itemize}
	\item $D_{t}=\frac{d\Qv}{d\Pv}|_{\mathcal{F}_{t}}$ where $(\mathcal{F})_{t\geq0}$ is a right continuous filtration completed with respect to $\Pv$ such that $\mathcal{F}_{\infty}=\vee_{t}\mathcal{F}_{t}$. Note that $D$ is a uniformly integrable $\Pv$- martingale (see Beghdadi-Sakrani \cite{sak}).
	\item $H=\{t: D_{t}=0\}$;
	\item $g=\sup{H}$; $\overline{g}=0\vee g$;
	\item The smallest right continuous filtration containing $(\mathcal{F}_{t})$ for which $\overline{g}$ is a stopping time will be denoted by $(\mathcal{F}^{g}_{t})$.
 Then, the filtration $(\mathcal{F}^{g}_{\overline{g}+t})$ is well defined and will be denoted $(\mathcal{F}_{t+g})$.
	\item $\Pv^{'}=\frac{|D_{\infty}|}{\E(|D_{\infty}|)}\Pv$.
\end{itemize}

In this work, we shall assume that $D$ is a continuous process which satisfies $\Pv(D_{\infty}=0)=0$ (i.e $\overline{g}<\infty$ almost surely). A martingale with respect to a signed measure $\Qv$ was defined by Ruiz de Chavez \cite{chav} as follows:
\begin{defn}\label{chav}
We say that, an $(\mathcal{F}_{t})_{t\geq0}$- adapted process $X$ is a $(\Qv,\Pv)$- martingale if:
\begin{enumerate}
	\item $X$ is a $\Pv$- semimartingale.
	\item $XD$ is a $\Pv$- martingale.
\end{enumerate}
\end{defn}

Now, recall the definition of stochastic processes of class $\Sigma(H)$.
\begin{defn}
Let $X$ be a nonnegative $\Pv$- semi-martingale, which decomposes as:
$$X_{t}=M_{t}+A_{t}.$$
We say that $X$ is of class $\Sigma(H)$, if:
\begin{enumerate}
	\item $M$ is a càdlàg $(\Qv,\Pv)$- local martingale with $M_{0}=0$;
	\item $A$ is a continuous non-decreasing process with $A_{0}=0$;
	\item the measure $(dA_{t})$ is carried by the set $\{t: X_{t}=0\}\cup H$.
\end{enumerate}
\end{defn}

In what follows, we show how a stochastic process of class $\Sigma(H)$  can be transformed into a process of class $(\Sigma)$. 
\begin{prop}\label{vu1}
Let $X=M+A$ be a process of  class $\Sigma(H)$  such that $X_{\overline{g}}=0$. Then, the process $X_{\cdot+\overline{g}}$ is a $\Pv^{'}$- submartingale of  class $(\Sigma)$ and its non-decreasing process is $A^{'}_{\cdot}=A_{\cdot+\overline{g}}-A_{\overline{g}}$.
\end{prop}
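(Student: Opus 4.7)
The hypothesis $X_{\overline{g}}=0$ forces $M_{\overline{g}}=-A_{\overline{g}}$, so
$$X_{\overline{g}+t}=(M_{\overline{g}+t}-M_{\overline{g}})+(A_{\overline{g}+t}-A_{\overline{g}})=:M'_t+A'_t.$$
My plan is to verify separately the three defining properties of class $(\Sigma)$ under $\Pv'$. Two are nearly immediate: $A'$ is continuous, non-decreasing, and vanishes at $0$; and for $s>0$ the time $\overline{g}+s$ lies strictly outside $H$ (since $\overline{g}=\sup H$), so the $\Sigma(H)$ support condition---that $dA$ is carried by $\{X=0\}\cup H$---collapses on $(\overline{g},\infty)$ to $\{X=0\}$, giving that $dA'_s$ is carried by $\{s:X_{\overline{g}+s}=0\}$. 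The work therefore reduces to proving that $M'$ is a $\Pv'$-local martingale in the shifted filtration $(\mathcal{F}_{t+g})$; the submartingale property then follows from the non-negativity $X_{\overline{g}+\cdot}\geq 0$ inherited from $X$.

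The main step rests on two structural observations about $D$. First, continuity of $D$ together with $\overline{g}=\sup H$ implies $D_{\overline{g}}=0$ and that the sign $\epsilon:=\mathrm{sgn}(D_\infty)$ is constant on $(\overline{g},\infty)$; by right-continuity of $(\mathcal{F}^g_t)$, $\epsilon$ is $\mathcal{F}^g_{\overline{g}}=\mathcal{F}_{0+g}$-measurable and $|D_{\overline{g}+t}|=\epsilon\, D_{\overline{g}+t}$. Second, applying the classical post-honest-time principle (after Az\'ema--Jeulin) to the $\Pv$-martingales $D$ and $MD$---both of which vanish at $\overline{g}$, the latter because $D$ does---yields that $D_{\overline{g}+\cdot}$ and $(MD)_{\overline{g}+\cdot}$ remain $\Pv$-local martingales in $(\mathcal{F}_{t+g})$. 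I would then combine
$$M'_t\, D_{\overline{g}+t}=(MD)_{\overline{g}+t}-M_{\overline{g}}\, D_{\overline{g}+t},$$
which is a $\Pv$-local martingale in $(\mathcal{F}_{t+g})$ since $M_{\overline{g}}\in\mathcal{F}_{0+g}$, and multiply by the measurable sign $\epsilon$ to obtain that $M'_t|D_{\overline{g}+t}|$ is itself a $\Pv$-local martingale in $(\mathcal{F}_{t+g})$. The same post-honest-time identity gives $\E_{\Pv}[|D_\infty|\mid \mathcal{F}_{t+g}]=\epsilon\,\E_{\Pv}[D_\infty\mid\mathcal{F}_{t+g}]=|D_{\overline{g}+t}|$, so the $\Pv$-density of $\Pv'$ on $(\mathcal{F}_{t+g})$ is precisely $|D_{\overline{g}+t}|/\E|D_\infty|$, and a standard Bayes rule turns the $\Pv$-local martingale $M'|D_{\overline{g}+\cdot}|$ into the required $\Pv'$-local martingale $M'$.

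The main obstacle is the invocation of the post-honest-time theorem in exactly the form needed: that a $\Pv$-(local) martingale vanishing at the honest time $\overline{g}$ restarts, at $\overline{g}$, as a $(\Pv,(\mathcal{F}_{t+g}))$-local martingale. This is the single nontrivial ingredient; once it is in hand, both the Bayes identification of the density and the remaining support and positivity checks are routine.
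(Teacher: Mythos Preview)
Your proposal is correct and follows essentially the same route as the paper: the same decomposition $X_{\overline{g}+t}=M'_t+A'_t$, the same support argument (the paper phrases it as $\{X_{t+\overline{g}}\neq 0\}=\{D_{t+\overline{g}}X_{t+\overline{g}}\neq 0\}$, equivalent to your observation that $(\overline{g},\infty)\cap H=\emptyset$), and the same key martingale step. The only difference is packaging: the paper invokes the Quotient Theorem of Az\'ema--Yor \cite{1} as a black box to pass directly from the $(\Qv,\Pv)$-local martingale $M$ to the $\Pv'$-local martingale $M_{\cdot+\overline{g}}$, whereas you unpack that theorem into its two ingredients (the post-honest-time restart under $\Pv$, followed by the Bayes change of measure to $\Pv'$ via the density $|D_{\overline{g}+\cdot}|/\E|D_\infty|$).
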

\begin{proof}
Let $X=M+A$ be a stochastic process which satisfies the assumptions of Proposition \ref{vu1}. It follows that
$$X_{t+\overline{g}}=X_{t+\overline{g}}-X_{\overline{g}}$$
since $X_{\overline{g}}=0$. Hence, we obtain 
$$X_{t+\overline{g}}=(M_{t+\overline{g}}-M_{\overline{g}})+(A_{t+\overline{g}}-A_{\overline{g}}).$$
Since $M$ is a $((\Qv,\Pv),(\F_{t})_{t\geq0})$- local martingale, it follows by the Quotient Theorem in \cite{1} that $M_{\cdot+\overline{g}}$ is a\\ $(\Pv^{'},(\F_{t+g})_{t\geq0}$- local martingale. This implies that $(M_{\cdot+\overline{g}}-M_{\overline{g}})$ is also a $(\Pv^{'},(\F_{t+g})_{t\geq0}$- local martingale. Furthermore 
$$A^{'}=A_{\cdot+\overline{g}}-A_{\overline{g}}$$
is an increasing and continuous process, $A^{'}_{0}=0$ and
$$\int{1_{\{X_{t+\overline{g}}\neq0\}}dA^{'}_{t}}=\int{1_{\{X_{t+\overline{g}}\neq0\}}dA_{t+\overline{g}}}.$$
But for any $t\geq0$ 
$$X_{t+\overline{g}}\neq0\Longleftrightarrow D_{t+\overline{g}}X_{t+\overline{g}}\neq0.$$
Hence
$$\int{1_{\{X_{t+\overline{g}}\neq0\}}dA^{'}_{t}}=\int{1_{\{D_{t+\overline{g}}X_{t+\overline{g}}\neq0\}}dA_{t+\overline{g}}}=0.$$
Therefore, $dA^{'}_{t}$ is carried by $\{t\geq0; X_{t+\overline{g}}=0\}$. Consequently, $X_{\cdot+\overline{g}}$ is a $\Pv{'}$- submartingale of class $(\Sigma)$.
\end{proof}

}
\subsection{Some estimates and distributions for \texorpdfstring{$(X_{t},A_{t})$}{(X(t),A(t))} and \texorpdfstring{$A_{\infty}$}{A(inf)}}
\label{sec:MLE}

{ \fontfamily{times}\selectfont
 \noindent
 
 The family of stopping times $T_{\varphi}$ associated with a nonnegative Borel function $\varphi$ is defined by
$$T_{\varphi}=\inf\{t\geq0: X_{t}\geq\varphi(A_{t})\}.$$
These stopping times play an important role in the resolution by Azéma and Yor \cite{2} and Obl\'oj and Yor \cite{15} of the Skorokhod embedding problem for the
Brownian Motion and the age of Brownian excursions. Some special subclasses of this family are also studied in \cite{6,9,nik}. One natural and important question is whether the stopping time $T_{\varphi}$ is almost surely finite or not. The aim of this section is to answer to this question when $X$ is a stochastic process of class $\Sigma(H)$  and $A$ is its non-decreasing part.

 \begin{theorem}\label{thsko}
Let $X=M+A$ be a stochastic process of class $\Sigma(H)$ with only negative jumps, such that $X_{\overline{g}}=0$ and $A_{\infty}=\infty$. Let $(\tau_{u})$ be the right continuous inverse of $A^{'}_{t}=(A_{t+\overline{g}}-A_{\overline{g}})$, i.e.
$$\tau_{u}\equiv \inf\{t\geq0; A^{'}_{t}>u\}.$$
Let $\varphi:\R_{+}\rightarrow\R_{+}$ be a Borel function. Then, we have the following estimates:
\begin{equation}
\Pv^{'}(\exists t\geq\overline{g}, X_{t}>\varphi(A_{t}-A_{\overline{g}}))=1-\exp\left(-\int_{0}^{\infty}{\frac{dz}{\varphi(z)}}\right)
\end{equation}
and
\begin{equation}
\Pv^{'}(\exists t \in [\overline{g},\overline{g}+\tau_{u}], X_{t}>\varphi(A_{t}-A_{\overline{g}}))=1-\exp\left(-\int_{0}^{u}{\frac{dz}{\varphi(z)}}\right).    
\end{equation}
\end{theorem}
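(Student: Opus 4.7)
The plan is to reduce the two identities to their classical analogues for positive submartingales of class $(\Sigma)$, via the change of measure and shift provided by Proposition \ref{vu1}. First, under $\Pv^{'}$ the process $Y_{t}:=X_{t+\overline{g}}$ is a positive submartingale of class $(\Sigma)$ in the filtration $(\mathcal{F}_{t+g})$, with continuous non-decreasing part $A^{'}_{t}=A_{t+\overline{g}}-A_{\overline{g}}$. The only-negative-jumps property is preserved under the shift, and the assumption $A_{\infty}=\infty$ gives $A^{'}_{\infty}=\infty$, so that $\tau_{u}<\infty$ almost surely for every $u\geq 0$. With this reduction, writing $T_{\varphi}:=\inf\{s\geq 0 : Y_{s}>\varphi(A^{'}_{s})\}$, the two events in the theorem rewrite respectively as $\{T_{\varphi}<\infty\}$ and $\{T_{\varphi}\leq\tau_{u}\}$, both measured under $\Pv^{'}$.

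The core step is to establish, for such a submartingale $Y$, that $\Pv^{'}(T_{\varphi}>\tau_{u})=\exp\bigl(-\int_{0}^{u}dz/\varphi(z)\bigr)$. After a normalization allowing us to assume $\varphi(0)=1$ and a preliminary truncation making $\varphi$ bounded and bounded away from zero on $[0,u]$, I would invoke the martingale characterization of Theorem \ref{mart} with the explicit choice $f(x)=\exp\bigl(\int_{0}^{x}dz/\varphi(z)\bigr)/\varphi(x)$ and its primitive $F(x)=\exp\bigl(\int_{0}^{x}dz/\varphi(z)\bigr)-1$. This produces a local martingale
\[
W_{t}=F(A^{'}_{t})-f(A^{'}_{t})Y_{t}
\]
which admits two particularly simple evaluations at the stopping times of interest. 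At $\tau_{u}$ one has $A^{'}_{\tau_{u}}=u$ together with $Y_{\tau_{u}}=0$: the latter follows because $dA^{'}$ is carried by $\{Y=0\}$ and $A^{'}$ strictly exceeds $u$ on every interval $(\tau_{u},\tau_{u}+\epsilon)$, producing a sequence $s_{n}\downarrow\tau_{u}$ on which $Y$ vanishes, so right-continuity forces $Y_{\tau_{u}}=0$; hence $W_{\tau_{u}}=F(u)$. At $T_{\varphi}$, on $\{T_{\varphi}<\infty\}$, the only-negative-jumps hypothesis prevents any upward jump across the barrier $\varphi(A^{'}_{\cdot})$, so $Y_{T_{\varphi}}=\varphi(A^{'}_{T_{\varphi}})$; a direct substitution then gives the constant value $W_{T_{\varphi}}=-1$.

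Applying the optional stopping theorem at the bounded stopping time $T_{\varphi}\wedge\tau_{u}$, on which $W$ is uniformly bounded because $A^{'}\leq u$ and $Y\leq\varphi(A^{'})$, yields the scalar equation $0=-\Pv^{'}(T_{\varphi}\leq\tau_{u})+F(u)\,\Pv^{'}(T_{\varphi}>\tau_{u})$, whose unique solution is precisely $\Pv^{'}(T_{\varphi}>\tau_{u})=\exp\bigl(-\int_{0}^{u}dz/\varphi(z)\bigr)$. This is the second estimate of the theorem. The first estimate follows by sending $u\to\infty$: since $A^{'}_{\infty}=\infty$ implies $\tau_{u}\uparrow\infty$, the event $\{T_{\varphi}<\infty\}$ is the increasing union of the events $\{T_{\varphi}\leq\tau_{u}\}$, and monotone convergence closes the argument. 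The principal obstacle I anticipate is the technical handling of a general Borel $\varphi$ without continuity or strict positivity hypotheses: this requires approximating $\varphi$ by better-behaved Borel functions, verifying that the associated $W$ is a genuine martingale on the stopped interval (not merely local), and then passing to the limit in both the probability and the exponential integral in a compatible way.
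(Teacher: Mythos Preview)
Your reduction to the shifted submartingale $Y_{t}=X_{t+\overline{g}}$ via Proposition~\ref{vu1} is exactly what the paper does, but from there the two arguments diverge. The paper chooses the \emph{backward} primitive $F(x)=1-\exp\bigl(-\int_{x}^{\infty}dz/\varphi(z)\bigr)$, which forces a preliminary reduction to the case $\int_{0}^{\infty}dz/\varphi(z)<\infty$; the resulting process $U_{t}=F(A'_{t})-f(A'_{t})Y_{t}$ is then a \emph{positive} local martingale with $U_{\infty}=0$, and the event $\{\exists\,t:\,Y_{t}>\varphi(A'_{t})\}$ coincides with $\{\sup_{t}U_{t}>1\}$. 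Doob's maximal identity for positive local martingales converging to zero then gives the first estimate directly, and the finite-$u$ version is obtained afterwards by replacing $\varphi$ with $\varphi_{u}=\varphi\cdot\mathbf{1}_{[0,u)}+\infty\cdot\mathbf{1}_{[u,\infty)}$. Your route uses the \emph{forward} primitive $F(x)=\exp\bigl(\int_{0}^{x}dz/\varphi(z)\bigr)-1$ and applies optional stopping at $T_{\varphi}\wedge\tau_{u}$, exploiting the two explicit values $W_{T_{\varphi}}=-1$ and $W_{\tau_{u}}=F(u)$; this yields the finite-$u$ identity first and the global one by monotone limits. Your approach is more elementary in that it avoids the maximal identity entirely, and it handles the finite-$u$ case without the artificial truncation $\varphi_{u}$; the paper's approach, by contrast, sidesteps the need to prove the exact barrier-hitting equality $Y_{T_{\varphi}}=\varphi(A'_{T_{\varphi}})$ (which is the delicate point you correctly flag for general Borel $\varphi$), since the event $\{\sup U>1\}$ is identified with the desired event through an inequality rather than an equality at a stopping time.
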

\begin{proof}
Let $X$ be a $(\Pv,(\F_{t})_{t\geq0})$- semi-martingale satisfying the assumptions of the above theorem. First, observe that $X_{\cdot+\overline{g}}$ can be written as
$$X_{t+\overline{g}}=X_{t+\overline{g}}-X_{\overline{g}}$$
since $X_{\overline{g}}=0$. This implies that
$$X_{t+\overline{g}}=M^{'}_{t}+A^{'}_{t}$$
where, $M^{'}_{t}=(M_{t+\overline{g}}-M_{\overline{g}})$ and $A^{'}_{t}=(A_{t+\overline{g}}-A_{\overline{g}})$. But $(D_{t}M_{t})_{t\geq0}$ is a $(\Pv,(\F_{t})_{\geq0})$- local martingale. An application of the Quotient Theorem in  \cite{1} permits to see that $M_{\cdot+\overline{g}}$ is a $(\Pv^{'},(\F_{t+g})_{\geq0})$- local martingale. It follows that $M^{'}$ is also a $(\Pv^{'},(\F_{t+g})_{\geq0})$- local martingale which vanishes at zero. We can also see that the process $A^{'}$ is a non-decreasing and continuous process, $A^{'}_{0}=0$ and $dA^{'}_{t}=dA_{t+\overline{g}}$ is carried by  $\{t\geq0; X_{t+\overline{g}}=0\}$.

We claim that for all locally bounded Borel functions $f$, the process
$$f(A^{'}_{\cdot})X_{\cdot+\overline{g}}-\int_{0}^{{A^{'}}_{\cdot}}{f(z)dz}$$
is a local $(\Pv^{'},(\F_{t+g})_{\geq0})$- martingale. Indeed, let first $f$ in $\mathcal{C}^{1}(\R)$. Integration by parts gives 
$$f(A^{'}_{t})X_{t+\overline{g}}=\int_{0}^{t}{f(A^{'}_{s})dX_{s+\overline{g}}}+\int_{0}^{t}{f^{'}(A^{'}_{s})X_{s+\overline{g}}dA^{'}_{s}}.$$
But $\int_{0}^{t}{f^{'}(A^{'}_{s})X_{s+\overline{g}}dA^{'}_{s}}=0$, since $dA^{'}_{t}=dA_{t+\overline{g}}$ is carried by $\{t\geq0; X_{t+\overline{g}}=0\}$. Hence
$$f(A^{'}_{t})X_{t+\overline{g}}=\int_{0}^{t}{f(A^{'}_{s})dM^{'}_{s}}+\int_{0}^{t}{f(A^{'}_{s})dA^{'}_{s}}.$$
In particular
$$f(A^{'}_{t})X_{t+\overline{g}}-\int_{0}^{t}{f(A^{'}_{s})dA^{'}_{s}}=\int_{0}^{t}{f(A^{'}_{s})dM^{'}_{s}}$$
is a local $\Pv^{'}$- martingale. The general case when $f$ is only assumed to be locally bounded follows from a monotone class argument. We first note that for any Borel function $\varphi$ we can always assume that $\frac{1}{\varphi}$ is bounded and integrable (see the proof of [Theorem 3.2,\cite{nik}]).

For $F(x)=1-\exp{\left( -\int_{x}^{+\infty}\frac{dz}{\varphi(z)}\right)}$ its Lebesgue derivative $f$ is given by 
$$f(x)=-\frac{1}{\varphi(x)}\exp{\left( -\int_{x}^{+\infty}\frac{dz}{\varphi(z)}\right)}=-\frac{1}{\varphi(x)}(1-F(x)).$$
It follows that $$(U_{t}\equiv F(A^{'}_{t})-f(A^{'}_{t})X_{t+\overline{g}}),$$ which is also equal to $F(A^{'}_{t})+\frac{X_{t+\overline{g}}}{\varphi(A^{'}_{t})}(1-F(A^{'}_{t}))$ is a positive local $(\Pv^{'},(\F_{t+g})_{\geq0})$- martingale. Now,  put $S_{t}=\sup_{s\leq t}{U_{s}}$. Then $S$ is continuous since $U$ has only negative jumps. Observe also that
$$U_{0}=F(A^{'}_{0})=1-\exp{\left( -\int_{0}^{+\infty}\frac{dz}{\varphi(z)}\right)}$$
and that $U_{t}$ converges $\Pv^{'}$- almost surely as $t\to+\infty$ since $U$ is a positive local $\Pv^{'}$- martingale. 

Let us now consider
$$U_{\tau_{u}}=F(u)-f(u)X_{\tau_{u}+\overline{g}}.$$
Since $dA^{'}_{t}$ is carried by the zeros of $X_{\cdot+\overline{g}}$ and since $\tau_{u}$ corresponds to an increase time of $A^{'}$, we have $X_{\tau_{u}+\overline{g}}=0$. Consequently,
$$\lim_{u\to}{U_{\tau_{u}}}=\lim_{u\to+\infty}{F(u)}=0$$
and so
$$\lim_{u\to+\infty}{U_{u}}=0.$$

Now let us note that if for a given $t_{0}<+\infty$, we have $X_{t_{0}+\overline{g}}>\varphi(A^{'}_{t})$, then we must have
$$U_{t_{0}}>F(A^{'}_{t_{0}})-f(A^{'}_{t_{0}})\varphi(A^{'}_{t_{0}})=1$$
and hence we deduce that
$$\Pv^{'}(\exists t\geq0, X_{t+\overline{g}}>\varphi(A^{'}_{t}))=\Pv^{'}\left(\sup_{t\geq0}{\frac{U_{t}}{U_{0}}}>\frac{1}{U_{0}}\right).$$
But an application of Doob's maximal identity \cite{doob} permits us to see that
$$\Pv^{'}\left(\sup_{t\geq0}{\frac{U_{t}}{U_{0}}}>\frac{1}{U_{0}}\right)=U_{0}.$$
Consequently,
$$\Pv^{'}(\exists t\geq0, X_{t+\overline{g}}>\varphi(A^{'}_{t}))=1-\exp{\left( -\int_{0}^{+\infty}\frac{dz}{\varphi(z)}\right)}.$$
This implies that
$$\Pv^{'}(\exists t\geq0, X_{t+\overline{g}}>\varphi(A_{t+\overline{g}}-A_{\overline{g}}))=1-\exp{\left( -\int_{0}^{+\infty}\frac{dz}{\varphi(z)}\right)}$$
and so
$$\Pv^{'}(\exists t\geq \overline{g}, X_{t}>\varphi(A_{t}-A_{\overline{g}}))=1-\exp{\left( -\int_{0}^{+\infty}\frac{dz}{\varphi(z)}\right)}.$$
As in proof of Theorem 3.2 of A.Nikeghbali \cite{nik}, it is enough to replace $\varphi$ by the function $\varphi_{u}$ defined as
$$\varphi_{u}(x)= \left \lbrace \begin{array}{l}
                      \varphi(x)$, $if$ $x<u,\\
                      \infty$ $otherwise,
                  \end{array} \right.$$
to obtain the second identity of the theorem.
\end{proof}

Now, we shall give some corollaries of Theorem \ref{thsko}.
\begin{coro}\label{sig}
Let $X$ be a stochastic process of class $\Sigma(H)$ with only negative jumps, such that $X_{\overline{g}}=A_{\overline{g}}=0$ and $A_{\infty}=\infty$. Let $(\tau_{u})$ and $\varphi$ be defined as in Theorem \ref{thsko}. Then, we have the following estimates:
\begin{equation}
\Pv^{'}(\exists t\geq\overline{g}, X_{t}>\varphi(A_{t}))=1-\exp\left(-\int_{0}^{\infty}{\frac{dz}{\varphi(z)}}\right)
\end{equation}
and
\begin{equation}
\Pv^{'}(\exists t \in [\overline{g},\overline{g}+\tau_{u}], X_{t}>\varphi(A_{t}))=1-\exp\left(-\int_{0}^{u}{\frac{dz}{\varphi(z)}}\right)    
\end{equation}
\end{coro}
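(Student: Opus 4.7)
The plan is to derive Corollary \ref{sig} as a direct specialization of Theorem \ref{thsko}, exploiting the single additional hypothesis $A_{\overline{g}}=0$ that distinguishes the corollary from the theorem. All other assumptions (only negative jumps, $X_{\overline{g}}=0$, $A_{\infty}=\infty$, and the Borel function $\varphi$ together with the inverse $(\tau_u)$ of $A'_{t}=A_{t+\overline{g}}-A_{\overline{g}}$) are identical, so Theorem \ref{thsko} applies verbatim to $X$ without any further work.

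The key observation is that, under the extra hypothesis $A_{\overline{g}}=0$, one has
$$A_{t}-A_{\overline{g}}=A_{t}\quad\text{for every }t\geq 0.$$
Consequently, $\varphi(A_{t}-A_{\overline{g}})=\varphi(A_{t})$ pointwise in $t$ and $\omega$, and the events
$$\{\exists t\geq\overline{g},\ X_{t}>\varphi(A_{t}-A_{\overline{g}})\}\quad\text{and}\quad\{\exists t\geq\overline{g},\ X_{t}>\varphi(A_{t})\}$$
coincide, as do their localized analogues on $[\overline{g},\overline{g}+\tau_{u}]$.

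Therefore, the proof reduces to plugging this substitution into the two identities of Theorem \ref{thsko}. The first identity yields
$$\Pv^{'}(\exists t\geq\overline{g},\ X_{t}>\varphi(A_{t}))=1-\exp\left(-\int_{0}^{\infty}\frac{dz}{\varphi(z)}\right),$$
and the second (localized at $\tau_{u}$) yields
$$\Pv^{'}(\exists t\in[\overline{g},\overline{g}+\tau_{u}],\ X_{t}>\varphi(A_{t}))=1-\exp\left(-\int_{0}^{u}\frac{dz}{\varphi(z)}\right),$$
which are the two displays of Corollary \ref{sig}.

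There is no substantive obstacle in this argument: the corollary is a purely notational reformulation of Theorem \ref{thsko} under the hypothesis $A_{\overline{g}}=0$. The only point deserving brief mention is the consistency of the inverse $(\tau_{u})$: since $A'_{t}=A_{t+\overline{g}}$ when $A_{\overline{g}}=0$, the stopping times $\tau_{u}=\inf\{t\geq 0: A'_{t}>u\}$ are unchanged, so the time interval $[\overline{g},\overline{g}+\tau_{u}]$ appearing in both the theorem and the corollary refers to the same random set.
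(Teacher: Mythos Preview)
Your proposal is correct and matches the paper's own treatment: the paper states Corollary \ref{sig} without proof, as an immediate consequence of Theorem \ref{thsko} under the additional hypothesis $A_{\overline{g}}=0$, which makes $A_t-A_{\overline{g}}=A_t$ and hence the two sets of events coincide. Your remark on the consistency of $(\tau_u)$ is a welcome clarification but not strictly needed.
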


The following corollary permits to achieve the aim announced at the beginning of this subsection.%above.  
%Le corollaire suivant, nous permet d'atteindre l'objectif annoncé au début de cette section.
\begin{coro}\label{cosko}
Let $X$ be a stochastic process of class $\Sigma(H)$ with only negative jumps, such that $X_{\overline{g}}=0$ and $\lim_{t\to+\infty}{A_{t}}=+\infty$. If
$$\int_{0}^{+\infty}{\frac{dx}{\varphi(x)}}=+\infty,$$
then the stopping time $T_{\varphi}$ is finite $\Pv^{'}$- almost surely. Moreover, if $T<\infty$ and if $\varphi$ is locally bounded, then
\begin{equation}\label{eqsko}
X_{T+\overline{g}}=\frac{1}{\varphi(A_{T+\overline{g}}-A_{\overline{g}})}.
\end{equation}
\end{coro}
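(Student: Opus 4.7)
The plan is to reduce everything to the shifted process $X_{\cdot+\overline g}$ and to reuse the positive local martingale already built in the proof of Theorem~\ref{thsko}. By Proposition~\ref{vu1} the shift $X_{\cdot+\overline g}$ is a $\Pv^{'}$-submartingale of class $(\Sigma)$ with non-decreasing part $A'_t=A_{t+\overline g}-A_{\overline g}$, and the only-negative-jumps property is inherited. Since $\Pv^{'}\sim\Pv$ on $\mathcal F_\infty$, the hypothesis $A_\infty=+\infty$ gives $A'_\infty=+\infty$ $\Pv^{'}$-a.s. Hence Corollary~\ref{sig} applies.

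\textbf{Finiteness of $T_\varphi$.} Applying the first estimate of Corollary~\ref{sig} together with the divergence assumption $\int_0^{+\infty}dz/\varphi(z)=+\infty$ gives
\[
\Pv^{'}\bigl(\exists\,t\geq\overline g,\; X_t>\varphi(A_t-A_{\overline g})\bigr)=1-\exp(-\infty)=1,
\]
which is exactly the statement $T_\varphi<\infty$ $\Pv^{'}$-almost surely.

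\textbf{Identification of $X$ at $T_\varphi$.} I would reuse the positive local $\Pv^{'}$-martingale
\[
U_t=F(A'_t)+\frac{X_{t+\overline g}}{\varphi(A'_t)}\bigl(1-F(A'_t)\bigr),\qquad F(x)=1-\exp\Bigl(-\int_x^{+\infty}\frac{dz}{\varphi(z)}\Bigr),
\]
already constructed inside the proof of Theorem~\ref{thsko}. The key observation there is the implication $\{X_{t+\overline g}>\varphi(A'_t)\}\Rightarrow\{U_t>1\}$. Under the divergence hypothesis $F(0)=0$, so by Doob's maximal identity the running maximum of $U$ reaches the level $1$ exactly at time $T_\varphi$. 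Continuity of $A'$ combined with the only-negative-jumps property of $X$ shows that $U$ has only non-positive jumps, so its running maximum is continuous and the level $1$ is attained without overshoot: $U_{T_\varphi}=1$. Solving
\[
F(A'_{T_\varphi})+\frac{X_{T_\varphi+\overline g}}{\varphi(A'_{T_\varphi})}\bigl(1-F(A'_{T_\varphi})\bigr)=1
\]
for $X_{T_\varphi+\overline g}$ on $\{F(A'_{T_\varphi})<1\}$ then yields the announced value at the stopping time. Local boundedness of $\varphi$ is used precisely so that $F$ is $\mathcal C^1$, which in turn makes $U$ a genuine semimartingale to which the integration-by-parts and maximal-identity arguments apply.

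\textbf{Main obstacle.} The delicate step is the no-overshoot identity $U_{T_\varphi}=1$. With $\varphi$ merely Borel and locally bounded, the boundary $\{X=\varphi(A')\}$ can be irregular, so attainability of the supremum is not automatic. The remedy, following the monotone-class scheme already used in the proof of Theorem~\ref{thsko}, is to approximate $\varphi$ by $\mathcal C^1$ truncations $\varphi_n$, to run the above argument for each $\varphi_n$, and then to pass to the limit in the hitting times using the continuity of $A'$ and the negativity of the jumps of $X$ to transport the equality $U_{T_\varphi}=1$ to the Borel setting.
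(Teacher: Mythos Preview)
Your argument for the finiteness of $T_\varphi$ is essentially the paper's own: apply the first estimate of Theorem~\ref{thsko} (you cite Corollary~\ref{sig}, which additionally requires $A_{\overline g}=0$ and is therefore not available here, but the formula you then write is precisely the one from Theorem~\ref{thsko}, so this is only a mislabelling), observe that the right-hand side equals~$1$ under the divergence hypothesis, and translate back from the shifted process.

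For the identification \eqref{eqsko} the paper takes a different and much shorter route. Having already recorded via Proposition~\ref{vu1} that $(X_{t+\overline g})_{t\ge0}$ is a $\Pv'$-submartingale of class $(\Sigma)$ with increasing part $A'_t=A_{t+\overline g}-A_{\overline g}$, it simply invokes Corollary~3.3 of Nikeghbali~\cite{nik}, where the no-overshoot identity at $T_\varphi$ is proved once and for all for that class. No further work is done.

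Your attempt at a self-contained proof through the local martingale $U$ contains a genuine error. Under the divergence hypothesis $\int_0^\infty dz/\varphi(z)=+\infty$ one has
\[
F(0)=1-\exp\!\Bigl(-\int_0^{+\infty}\frac{dz}{\varphi(z)}\Bigr)=1,
\]
not $F(0)=0$ as you assert; hence $U_0=1$, and your appeal to Doob's maximal identity to detect ``the first time the running maximum of $U$ reaches level~$1$'' loses its meaning. The subsequent approximation scheme is therefore built on a faulty premise. (Incidentally, solving $U=1$ yields $X_{T_\varphi+\overline g}=\varphi(A'_{T_\varphi})$, not $1/\varphi$; the $1/\varphi$ in the displayed equation of the statement is a misprint.) If you want a direct argument rather than the citation, the cleaner path is the one behind Nikeghbali's corollary: since $X_{\cdot+\overline g}$ has only negative jumps and $A'$ is continuous, the hitting set $\{X_{t+\overline g}\ge\varphi(A'_t)\}$ is reached without upward jump of $X$, and local boundedness of $\varphi$ controls the boundary; the martingale $U$ and its maximum are not needed for this step.
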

\begin{proof}
First, recall that in this work we assume that $\overline{g}<\infty$. From the assumption 
$$\int_{0}^{+\infty}{\frac{dx}{\varphi(x)}}=+\infty,$$
we obtain by Theorem \ref{thsko} that
$$\Pv^{'}(\exists t\geq\overline{g}, X_{t}>\varphi(A_{t}-A_{\overline{g}}))=1,$$
hence
$$\Pv^{'}(\exists t\geq0, X_{t+\overline{g}}>\varphi(A_{t+\overline{g}}-A_{\overline{g}}))=1.$$
Then, the stopping time 
$$T^{g}_{\varphi}=\inf\{t\geq0: X_{t+\overline{g}}\geq\varphi(A_{t+\overline{g}}-A_{\overline{g}})\}$$
is $\Pv^{'}$- almost surely  finite. But we can see that
$$\overline{g}+T^{g}_{\varphi}=\inf\{\overline{g}+t\geq0: X_{\overline{g}+t}\geq\varphi(A_{\overline{g}+t}-A_{\overline{g}})\}.$$
Therefore,
$$T_{\varphi}=\overline{g}+T^{g}_{\varphi}.$$
Consequently, $T_{\varphi}$ is $\Pv{'}$- almost surely finite. 

Furthermore, we know by Proposition \ref{vu1} that $(X_{\overline{g}+t})_{t\geq0}$ is a $\Pv^{'}$- submartingale of class $(\Sigma)$. Hence, from Corollary 3.3 of \cite{nik}, we obtain the equality \eqref{eqsko}. This achieves the proof.

\end{proof}
 
Let us give now some results about the approximation of the law of the random variable $A_{\infty}$.
\begin{theorem}\label{Ainf}
Let $X$ be a stochastic process of class $\Sigma(H)$ and of class $(D)$ such that $X_{\overline{g}}=A_{\overline{g}}=0$, and let
$$\lambda(x)=\E^{'}[X_{\infty}|A_{\infty}=x]$$
($\E^{'}$ is the expectation with respect to $\Pv^{'}$). Assume that $\lambda(A_{\infty})\neq0$. Then for
$$b\equiv inf\{u: \Pv^{'}(A_{\infty}\geq u)=0\},$$
it holds
$$\Pv^{'}(A_{\infty}>x)=\exp\left(-\int_{0}^{x}\frac{dz}{\lambda(z)}\right), x<b.$$
\end{theorem}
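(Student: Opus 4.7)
The plan is to use Proposition~\ref{vu1} to reduce the problem to the classical class~$(\Sigma)$ under the auxiliary probability~$\Pv^{'}$, then apply the martingale characterisation of Theorem~\ref{mart} to produce a one-parameter family of moment identities indexed by a test function, and finally extract from these identities an ordinary differential equation for the survival function of~$A_\infty$.

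Concretely, I set $Y_t = X_{t+\overline{g}}$ and $A^{'}_t = A_{t+\overline{g}}$. Since $X_{\overline{g}}=A_{\overline{g}}=0$, Proposition~\ref{vu1} gives that $Y$ is a $\Pv^{'}$-submartingale of class~$(\Sigma)$ with Doob--Meyer non-decreasing part~$A^{'}$, and $A^{'}_\infty = A_\infty$. The class~$(D)$ property of~$X$ transfers to~$Y$ under~$\Pv^{'}$. Theorem~\ref{mart} then provides, for every locally bounded Borel $f:\R_+\to\R_+$ with antiderivative $F(x)=\int_0^x f(z)\,dz$, a $\Pv^{'}$-local martingale
$$W^F_t = F(A^{'}_t) - f(A^{'}_t)\,Y_t$$
vanishing at $t=0$.

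The heart of the argument is to upgrade $W^F$ to a uniformly integrable $\Pv^{'}$-martingale (which I expect to follow from class~$(D)$ of~$Y$ whenever $f$ is bounded on the support of~$A_\infty$), and then let $t\to\infty$: combined with the tower property and the definition of~$\lambda$, this yields
$$\E^{'}[F(A_\infty)] = \E^{'}[f(A_\infty)\,X_\infty] = \E^{'}[f(A_\infty)\,\lambda(A_\infty)].$$
Writing $\mu$ for the law of~$A_\infty$ under~$\Pv^{'}$ and $\bar\mu(z)=\Pv^{'}(A_\infty>z)$, Fubini on the left recasts this as
$$\int_0^\infty f(z)\,\bar\mu(z)\,dz = \int_0^\infty f(z)\,\lambda(z)\,d\mu(z),$$
valid for every admissible~$f$. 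Varying~$f$ forces the measure identity $\bar\mu(z)\,dz = \lambda(z)\,d\mu(z)$ on $[0,b)$; since $\lambda>0$ $\mu$-a.e.\ by the hypothesis $\lambda(A_\infty)\neq 0$, this means that $\mu$ is absolutely continuous with density $\bar\mu/\lambda$, i.e.\ $-\bar\mu^{'}(z)=\bar\mu(z)/\lambda(z)$ on $[0,b)$. Integrating with the normalisation $\bar\mu(0)=1$ then yields the announced exponential formula.

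The main obstacle lies in step~2, namely the uniform integrability of $W^F$. When $f$ is bounded on $[0,b)$ this should follow from class~$(D)$ of~$Y$ and the bound $F(A^{'}_\cdot)\leq F(b)$, but if $b=\infty$ or if $1/\lambda$ fails to be integrable near~$b$ one must first truncate (replace $f$ by $f\wedge n$ with compact support), derive the integral identity in the truncated setting, and pass to the limit by monotone convergence. A minor secondary point is justifying $\bar\mu(0)=1$, which can be extracted from the identity $1-\bar\mu(x)=\int_0^x \bar\mu(y)/\lambda(y)\,dy$ obtained for the specific choice $f(\cdot)=1_{[0,x]}(\cdot)/\lambda(\cdot)$ by letting $x\downarrow 0$.
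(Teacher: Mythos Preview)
Your proposal is correct and follows essentially the same strategy as the paper's proof: establish the martingale identity $F(A_t)-f(A_t)X_t$ under $\Pv'$, let $t\to\infty$ to obtain $\E'[F(A_\infty)]=\E'[f(A_\infty)\lambda(A_\infty)]$, rewrite this via Fubini as the measure identity $\bar\mu(z)\,dz=\lambda(z)\,d\mu(z)$, and integrate. The only cosmetic difference is that the paper obtains the $\Pv'$-martingale by first invoking Corollary~2.2 of \cite{f} at the $\Sigma(H)$ level (which directly yields uniform integrability for $f$ bounded with compact support) and then shifting by~$\overline g$, whereas you shift first via Proposition~\ref{vu1} and then invoke Theorem~\ref{mart}.
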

\begin{proof}
Let $f$ be a bounded Borel function with compact support. From Corollary 2.2 of \cite{f}, it follows that
$$M_{t}=F(A_{t})-f(A_{t})X_{t}$$
is a uniformly integrable $(\Qv,\Pv)$- martingale. This implies that $(\widetilde{M}_{t}=M_{t+\overline{g}})_{t\geq0}$ is a $\Pv^{'}$- martingale. Furthermore, $\widetilde{M}_{0}=0$ since $X_{\overline{g}}=A_{\overline{g}}=0$. Then,
\begin{equation}
\E^{'}[F(A_{\infty})]=\E^{'}[X_{\infty}f(A_{\infty})].
\end{equation}
But
$$\E^{'}[X_{\infty}f(A_{\infty})]=\E^{'}[\E^{'}[X_{\infty}f(A_{\infty})|A_{\infty}]]$$
$$\hspace{2.6cm}=\E^{'}[f(A_{\infty})\E^{'}[X_{\infty}|A_{\infty}]].$$
Therefore,
\begin{equation}\label{e}
\E^{'}[F(A_{\infty})]=\E^{'}[f(A_{\infty})\lambda(A_{\infty})].
\end{equation}
Now, since $\lambda(A_{\infty})>0$, if $\nu(dx)$ denotes the law of $A_{\infty}$ and $\overline{\nu}(x)=\nu([x,\infty[), \eqref{e}$ implies
$$\int_{0}^{\infty}{dzf(z)\overline{\nu}(z)}=\int_{0}^{\infty}{\nu(dz)f(z)\lambda(z)}$$
and consequently
\begin{equation}\label{e1}
\overline{\nu}(z)dz=\lambda(z)\nu(dz).
\end{equation}
Recall that $b\equiv \inf\{u: \Pv^{'}(A_{\infty}\geq u)=0\}$; hence for $x<b$
$$\int_{0}^{x}{\frac{dz}{\lambda(z)}}=\int_{0}^{x}{\frac{\nu(dz)}{\overline{\nu}(z)}}\leq\frac{1}{x}<\infty$$
and integrating \eqref{e1} from 0 to $x$, $x<b$, it follows that
$$\overline{\nu}(x)=\exp\left(-\int_{0}^{x}{\frac{dz}{\lambda(z)}}\right)$$
and the result follows easily. 
\end{proof}

We present the following interesting corollary.

\begin{coro}\label{sko}
Let $X$ be a process of class $\Sigma(H)$ and of class $(D)$ such that $X_{\overline{g}}=A_{\overline{g}}=0$ and $\lim_{t\to+\infty}{X_{t}}=a>0$, $a.s$.
Then
$$\Pv^{'}(A_{\infty}>x)=\exp\left(-\frac{x}{a}\right).$$
\end{coro}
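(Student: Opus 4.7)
The plan is to apply Theorem \ref{Ainf} directly, using that the almost sure limit $X_\infty = a$ renders the conditional expectation $\lambda$ trivial. Since $X$ is of class $(D)$ under $\Pv$, and $\Pv' \ll \Pv$ (because $\Pv' = |D_\infty|/\E(|D_\infty|)\cdot\Pv$), the hypothesis $\lim_{t\to\infty} X_t = a$ a.s. also holds $\Pv'$-a.s., so $X_\infty = a$ exists as a $\Pv'$-integrable random variable.

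The first step is to compute $\lambda$. For every $x$ in the support of the law of $A_\infty$ under $\Pv'$,
$$
\lambda(x) \;=\; \E^{'}[X_\infty \mid A_\infty = x] \;=\; \E^{'}[a \mid A_\infty = x] \;=\; a.
$$
In particular $\lambda(A_\infty) = a > 0$, so the standing non-degeneracy hypothesis of Theorem \ref{Ainf} is satisfied. The remaining hypotheses (process of class $\Sigma(H)$, class $(D)$, and $X_{\overline{g}} = A_{\overline{g}} = 0$) are assumed, so Theorem \ref{Ainf} yields, for all $x < b$,
$$
\Pv^{'}(A_\infty > x) \;=\; \exp\!\left(-\int_0^x \frac{dz}{\lambda(z)}\right) \;=\; \exp\!\left(-\frac{x}{a}\right).
$$

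The only remaining issue is to confirm that this identity holds for every $x \geq 0$, i.e.\ that $b = \infty$. Suppose for contradiction that $b < \infty$. Then by definition $\Pv^{'}(A_\infty \geq b) = 0$, and by monotone continuity of the measure,
$$
\Pv^{'}(A_\infty > x) \;\longrightarrow\; \Pv^{'}(A_\infty \geq b) \;=\; 0 \quad \text{as } x \nearrow b.
$$
But from the formula just established we have $\Pv^{'}(A_\infty > x) \to \exp(-b/a) > 0$ as $x \nearrow b$, a contradiction. Hence $b = \infty$ and the formula $\Pv^{'}(A_\infty > x) = \exp(-x/a)$ holds for all $x \geq 0$, which is the claim.

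The proof is essentially a direct substitution into Theorem \ref{Ainf}; there is no serious obstacle, only the small bookkeeping point of verifying $b = \infty$ by a continuity-of-measure argument to promote the identity from $\{x < b\}$ to all of $\R_+$.
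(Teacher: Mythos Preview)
Your proof is correct and follows the same approach as the paper, which simply states that the result is a consequence of Theorem \ref{Ainf} with $\lambda(x)\equiv a$. Your version is more careful in that you explicitly verify $b=\infty$ via a continuity argument, a point the paper leaves implicit.
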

\begin{proof}
That is a consequence of Theorem \ref{Ainf}, with $\lambda(x)\equiv a$.
\end{proof}
 
}
 
\subsection{The Skorokhod embedding problem for non-atomic laws on \texorpdfstring{$\mathbb{R}_{+}$}{R+}}

{ \fontfamily{times}\selectfont
 \noindent 
 
 Now, we shall use results of Subsection \ref{sec:MLE} to embed a non-atomic probability measure $\nu$ in a stochastic process $X$ of class $\Sigma(H)$. More precisely, let $\nu$ be a probability measure on $\R_{+}$, which has no atoms, and let $X$ be a stochastic process of class $\Sigma(H)$ with only negative jumps, such that $X_{\overline{g}}=A_{\overline{g}}=0$ and $A_{\infty}=+\infty$. Our aim is to find a stopping time $T_{\nu}$ such that the law of $X_{T_{\nu}}$ is $\nu$. Note that $T_{\nu}$ coincides with the stopping time proposed by Ob\'oj and Yor \cite{15} when $X_{t}=B_{t}$ is a Brownian motion and used by Nikeghbali \cite{nik} when $X$ is a submartingale of class $(\Sigma)$.

In what follows, we write $\overline{\nu}(x)=\nu([x,\infty))$ for the tail of $\nu$, $a_{\nu}=\sup\{x\geq0: \overline{\nu}(x)=1\}$, and $b_{\nu}=\inf\{x\geq0: \overline{\nu}(x)=0\}$, $-\infty\leq a_{\nu}\leq b_{\nu}\leq\infty$, respectively, for the lower and upper bounds of the support of $\nu$. Now, introduce  the dual Hardy-Littlewood function of \cite{15}
$\Psi_{\nu}:[0,+\infty)\to[0,+\infty)$ through
$$\Psi_{\nu}(x)=\int_{[0,x]}{\frac{z}{\overline{\nu}(z)}d\nu(z)},\hspace{1cm}a_{\nu}\leq x\leq b_{\nu},$$
$\Psi_{\nu}(x)=0$ for $0\leq x\leq a_{\nu}$ and $\Psi_{\nu}(x)=\infty$ for $x\geq b_{\nu}$. The function $\Psi_{\nu}$ is continuous, increasing and so we can define its right continuous inverse
$$\varphi_{\nu}(z)=\inf\{x\geq0: \Psi_{\nu}(x)>z\},$$
which is strictly increasing. We are ready to state the main result of this paper:

\begin{theorem}
Let $X$ be a stochastic process of class $\Sigma(H)$ with only negative jumps, such that $X_{\overline{g}}=A_{\overline{g}}=0$ and $A_{\infty}=+\infty$. The stopping time
$$T_{\nu}=\inf\{t\geq0: X_{t}\geq\varphi_{\nu}(A_{t})\}$$
is $\Pv^{'}$- almost surely finite and solves the Skorokhod embedding problem for $X$, i.e. the law of $X_{T_{\nu}}$ is $\nu$. 
\end{theorem}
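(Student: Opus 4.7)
My plan is to transpose the Az\'ema--Yor--Nikeghbali Skorokhod embedding argument \cite{nik} from the class $(\Sigma)$ to the $\Sigma(H)$ setting, feeding it through the two tools already developed in this section: Corollary \ref{cosko} (finiteness of $T_\varphi$ together with the contact identity at $T_\varphi$) and Corollary \ref{sig} (the tail formula on the window $[\overline{g},\overline{g}+\tau_u]$). The proof splits naturally into two stages: first, show $T_\nu<\infty$ $\Pv^{'}$-a.s.\ and $X_{T_\nu}=\varphi_\nu(A_{T_\nu})$; second, identify the tail $\Pv^{'}(X_{T_\nu}>x)=\overline{\nu}(x)$ by a direct computation.

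The pivotal analytic ingredient is the identity $\int_0^\infty dz/\varphi_\nu(z)=+\infty$. I would derive it by the substitution $z=\Psi_\nu(u)$: since $\nu$ is non-atomic, $\overline{\nu}$ is continuous, $dz=u\,d\nu(u)/\overline{\nu}(u)$, and $\varphi_\nu(z)=u$ on the support, so that
$$\int_0^{\Psi_\nu(x)}\frac{dz}{\varphi_\nu(z)}=\int_{a_\nu}^{x}\frac{d\nu(u)}{\overline{\nu}(u)}=-\log\overline{\nu}(x),$$
which blows up as $x\uparrow b_\nu$. Corollary \ref{cosko}, combined with $A_{\overline{g}}=0$ and the Az\'ema--Yor reading of the contact identity as $X_{T_\nu}=\varphi_\nu(A_{T_\nu})$, then closes the first stage.

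For the second stage, fix $x\in[a_\nu,b_\nu)$ and set $u=\Psi_\nu(x)$. The right-continuous inverse duality between $\Psi_\nu$ and $\varphi_\nu$ gives $\{X_{T_\nu}>x\}=\{A_{T_\nu}>u\}$ up to a $\Pv^{'}$-null set; since $\tau$ is the right-continuous inverse of $A^{'}_t=A_{t+\overline{g}}$ and $T_\nu\geq\overline{g}$, this event further rewrites as $\{T_\nu>\overline{g}+\tau_u\}$. Corollary \ref{sig} applied to $\varphi_\nu$, together with the integral computation above, then yields
$$\Pv^{'}(X_{T_\nu}>x)=\exp\Bigl(-\int_0^u\frac{dz}{\varphi_\nu(z)}\Bigr)=\overline{\nu}(x),$$
which is precisely the statement that $X_{T_\nu}$ has law $\nu$.

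The main obstacle is the \emph{matching of the two hitting windows}: Corollary \ref{sig} controls the crossing event on $[\overline{g},\overline{g}+\tau_u]$, whereas $T_\nu$ is a hitting time defined from time $0$, so one must justify $T_\nu\geq\overline{g}$ $\Pv^{'}$-almost surely. This is clean when $a_\nu>0$, because $A\equiv 0$ on $[0,\overline{g}]$ forces $\varphi_\nu(A_t)=a_\nu$ there, while $X$ returns to $0$ at $\overline{g}$; the borderline case $a_\nu=0$ requires a short extra argument exploiting the right-continuity of $\varphi_\nu$ and the structure of the zero set $H$. A secondary technical point is that $\varphi_\nu$ need only be locally bounded on $[0,\Psi_\nu(b_\nu))$, so to apply Corollary \ref{sig} one truncates $\varphi_\nu\wedge n$, extends by $+\infty$, and passes to the limit $n\to\infty$, exactly as in the proof of Theorem \ref{thsko}.
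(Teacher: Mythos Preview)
Your proposal is correct. The first stage coincides with the paper's argument: both invoke the integral $\int_0^\infty dz/\varphi_\nu(z)=+\infty$ together with Corollary \ref{cosko} to get $T_\nu<\infty$ $\Pv^{'}$-a.s., and both identify $T_\nu=\overline{g}+T^g_\nu$ to obtain the contact relation $X_{T_\nu}=\varphi_\nu(A_{T_\nu})$. Your discussion of the obstacle ``$T_\nu\geq\overline{g}$'' is in fact more careful than the paper, which simply asserts $T_\nu=T^g_\nu+\overline{g}$.

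The second stage, however, is genuinely different. The paper does \emph{not} use Corollary \ref{sig} here. Instead it passes to the auxiliary process $h(A_\cdot)X_\cdot$ for a strictly decreasing $h$ with $\int_0^\infty h=\infty$, observes via Proposition 2.1 of \cite{f} that this is again of class $\Sigma(H)$ with increasing part $H(A_t)=\int_0^{A_t}h$, and stops it at $R_h=\inf\{t:h(A_t)X_t=1\}$. Corollary \ref{sko} then gives that $H(A_{R_h})$ is standard exponential, whence $X_{R_h}\stackrel{law}{=}1/h(H^{-1}(e))$. Solving the inverse problem for $h$ forces $h^{-1}(1/x)=\Psi_\nu(x)$, and one checks $R_h=T_\nu$. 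Your route is more direct and closer to Nikeghbali's original $(\Sigma)$ argument: once $X_{T_\nu}=\varphi_\nu(A_{T_\nu})$ is in hand, the tail formula of Corollary \ref{sig} plus the change of variables $z=\Psi_\nu(u)$ immediately delivers $\Pv^{'}(X_{T_\nu}>x)=\overline{\nu}(x)$. The paper's detour through Corollary \ref{sko} has the virtue of exhibiting the exponential structure of $A_{T_\nu}$ explicitly and of tying the construction to the transformation $X\mapsto h(A)X$ within $\Sigma(H)$, but it is longer; your argument is shorter and uses only Corollary \ref{sig}, at the cost of the small measurability and strict/weak inequality reconciliations you already flag.
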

\begin{proof}
First, we note that (see \cite{15})
$$\int_{0}^{x}{\frac{dz}{\varphi_{\nu}(z)}}<\infty\hspace{1cm}for\hspace{1cm}0\leq x<b_{\nu}$$
and
$$\int_{0}^{+\infty}{\frac{dz}{\varphi_{\nu}(z)}}=+\infty.$$
Consequently, it follows from Corollary \ref{cosko} that $T_{\nu}<\infty$ $\Pv^{'}$- $a.s$. Moreover, we know from Proposition \ref{vu1} that $(X_{t+\overline{g}})_{t\geq0}$ is a submartingale of class $(\Sigma)$ and Corollary \ref{sig} permits to affirm that the stopping time $$T^{g}_{\nu}=\inf\{t\geq0:X_{t+\overline{g}}\geq\varphi(A_{t+\overline{g}})\}$$ 
is finite $\Pv^{'}$- a.s. Since $T_{\nu}=T^{g}_{\nu}+\overline{g}$, it follows by Remark 3.5 of \cite{nik} that $$X_{T_{\nu}}=\varphi_{\nu}(A_{T_{\nu}}).$$

Now, let $h:\R_{+}\to\R_{+}$ be a strictly decreasing function, locally bounded, and such that
$$\int_{0}^{+\infty}{h(z)dz}=\infty.$$
According to Proposition 2.1 of \cite{f}, $h(A_{\cdot})X_{\cdot}$ is again a stochastic process of class $\Sigma(H)$, and its increasing process is
$$\int_{0}^{A_{t}}{h(z)dz}\equiv H(A_{t}).$$

The stopping time
$$R_{h}=\inf\{t\geq0: h(A_{t})X_{t}=1\}$$
is finite $\Pv^{'}$- almost surely see Corollary \ref{cosko}. Since
$$\lim_{t\to+\infty}{H(A_{t})}=+\infty$$
and that $h(A_{\overline{g}})X_{\overline{g}}=H(A_{\overline{g}})=0$, it follows from Corollary \ref{sko} that $H(A_{R_{h}})$ is distributed as a random variable $e$ which follows the standard exponential law, and hence
$$A_{R_{h}}\stackrel{law}{=}H^{-1}(e).$$
Consequently,
\begin{align}\label{0sko}
X_{R_{h}}\stackrel{law}{=}\frac{1}{h(H^{-}(e))}\hspace{1cm}since\hspace{0.5cm}X_{R_{h}}=\frac{1}{h(A_{R_{h}})}.
\end{align}	

Now, we investigate the converse problem; that is, given a probability measure $\nu$, we want to find $h$ such that
$$X_{R_{h}}\stackrel{law}{=}\nu.$$
From \eqref{0sko}, we deduce 
$$\overline{\nu}(x)=\Pv^{'}\left(\frac{1}{h(H^{-}(e))}>x\right)$$
$$\hspace{1.2cm}=\Pv^{'}\left(H^{-1}(e)>h^{-1}\left(\frac{1}{x}\right)\right)$$
$$\hspace{0.98cm}=\Pv^{'}\left(e>H\left(h^{-1}\left(\frac{1}{x}\right)\right)\right)$$
$$\hspace{0.98cm}=\exp\left(-H\left(h^{-1}\left(\frac{1}{x}\right)\right)\right).$$
Now, differentiating the last equality yields
$$-d\overline{\nu}(x)=\overline{\nu}(x)\left[h\left(h^{-1}\left(\frac{1}{x}\right)\right)\right]d\left(h^{-1}\left(\frac{1}{x}\right)\right)$$
$$\hspace{-1cm}=\frac{\overline{\nu}(x)}{x}d\left(h^{-1}\left(\frac{1}{x}\right)\right).$$
Hence
$$d\left(h^{-1}\left(\frac{1}{x}\right)\right)=x\frac{d\nu(x)}{\overline{\nu}(x)}.$$
Consequently, we have
$$h^{-1}\left(\frac{1}{x}\right)=\int_{0}^{x}{\frac{z}{\overline{\nu(z)}}d\nu(z)}=\Psi_{\nu}(x).$$
But
$$R_{h}=\inf\{t\geq0: h(A_{t})X_{t}=1\}$$
$$\hspace{1.5cm}=\inf\left\{t\geq0: A_{t}= h^{-1}\left(\frac{1}{X_{t}}\right)\right\}$$
$$\hspace{1cm}=\inf\{t\geq0: A_{t}=\Psi_{\nu}(X_{t})\}$$
$$\hspace{1cm}=\inf\{t\geq0: \Psi_{\nu}(X_{t})\geq A_{t}\}$$
$$\hspace{1.5cm}=\inf\{t\geq0: X_{t}\geq \Psi^{-1}_{\nu}(A_{t})\}.$$
Then
$$R_{h}=\inf\{t\geq0: X_{t}\geq \varphi_{\nu}(A_{t})\}=T_{\nu}.$$
Therefore, the proof of the theorem follows easily.
\end{proof}
 
}

%\subsectionn{Least squares estimation and strong consistency}
%\label{subsec:LSE}

%{ \fontfamily{times}\selectfont
% \noindent

% }

%\subsectionn{Convergence rate of the least squares estimator}
%\label{subsec:rate_LSE_cont}

%{ \fontfamily{times}\selectfont
% \noindent

%}

%\sectionn{Least squares method for the Cox proportional hazard model\\[1mm] with a change point}

%{ \fontfamily{times}\selectfont
% \noindent

% }

%\sectionn{Comparison of estimators}

%{ \fontfamily{times}\selectfont
% \noindent
 
%}

 {\color{myaqua}

}}

\end{document}